\documentclass[reqno, 11pt]{amsart}%
\usepackage{amssymb}
\usepackage{amsaddr}
\usepackage[nesting]{hyperref}
\usepackage[pdftex]{graphicx}
\usepackage{listings}
\usepackage{multirow}
\usepackage{placeins}
\usepackage{color}
\usepackage{subfigure}
\usepackage{lscape}
\usepackage{dsfont}
\usepackage{amsmath}
\usepackage{amsfonts}
\usepackage{tikz}
\usepackage{verbatim}
\usepackage{accents} 
\usepackage{todonotes}
\usepackage{mathtools}
\usepackage{bbm}
\usepackage{caption}
\usepackage{natbib} 
\usepackage{float}

\textheight=24cm \textwidth = 16cm \topmargin= -1cm \oddsidemargin
0mm \evensidemargin 0mm
\newcommand{\BlackBoxes}{\global\overfullrule5pt}

\BlackBoxes

\providecommand{\U}[1]{\protect\rule{.1in}{.1in}}

\newcommand{\R}{\mathbb{R}} 
\newcommand{\N}{\mathbb{N}} 

\newcommand{\PP}{\mathbb{P}}
\newcommand{\EE}{\mathbb{E}}
\newcommand{\E}{\mathbb{E}}



\newtheorem{theorem}{Theorem}

\newtheorem{lemma}[theorem]{Lemma}

\theoremstyle{definition}
\newtheorem{example}[theorem]{Example}
\newtheorem{remark}[theorem]{Remark}

\numberwithin{equation}{section} \numberwithin{theorem}{section}
\def\0{\kern0pt\-\nobreak\hskip0pt\relax}

\makeatletter
\AtBeginDocument{ \def\@serieslogo{ \vbox to\headheight{ \parindent\z@ \fontsize{6}{7\p@}\selectfont
\vss}}}

\def\makeoverbar#1#2#3#4#5#6#7{ \setbox0=\hbox{$\m@th#2\mkern#5mu{{}#3{}}\mkern#6mu$} 
\setbox1=\null \dimen@=#4\fontdimen8#13 \dimen@=3.5\dimen@
\advance\dimen@ by \ht0 \dimen@=-#7\dimen@ \advance\dimen@ by \wd0
\ht1=\ht0 \dp1=\dp0 \wd1=\dimen@
\dimen@=\fontdimen8#13 \fontdimen8#13=#4\fontdimen8#13
\rlap{\hbox to \wd0{$\m@th\hss#2{\overline{\box1}}\mkern#5mu$}}
\fontdimen8#13=\dimen@}
\def\mylabel#1#2{{\def\@currentlabel{#2}\label{#1}}}
\makeatother
\overfullrule=0pt


\begin{document}
\title[  ]{Continuous-time mean field Markov decision models}
\author[N. \smash{B\"auerle}]{Nicole B\"auerle${}^*$}
\address[N. B\"auerle]{Department of Mathematics,
Karlsruhe Institute of Technology (KIT),  Karlsruhe, Germany}

\email{\href{mailto:nicole.baeuerle@kit.edu}{nicole.baeuerle@kit.edu}}

\author[S. \smash{H\"ofer}]{Sebastian H\"ofer${}^*$}
\address[S. H\"ofer]{Department of Mathematics,
Karlsruhe Institute of Technology (KIT),  Karlsruhe, Germany}

\email{\href{mailto:sebastian.hoefer@kit.edu} {sebastian.hoefer@kit.edu}}


\begin{abstract}
We consider a finite number of $N$ statistically equal agents,  each moving on a finite set of states according to a continuous-time Markov Decision Process (MDP). Transition intensities of the agents and generated rewards depend not only on the state and action of the agent itself, but also on the states of the other agents as well as the chosen action. Interactions like this are typical for a wide range of models in e.g.\  biology, epidemics, finance, social science and queueing systems among others. The aim is to maximize the expected discounted reward of the system, i.e. the agents have to cooperate as a team. Computationally this is a difficult task when $N$ is large. Thus, we consider the limit for $N\to\infty.$ In contrast to other papers we treat this problem from an MDP perspective.  This has the advantage that we need less regularity assumptions in order to construct asymptotically optimal strategies than using viscosity solutions of HJB equations. We also show that the convergence rate is $1/\sqrt{N}$. We show how to apply our results using two examples: a machine replacement problem and a problem from epidemics. We also show that optimal feedback policies from the limiting problem are not necessarily asymptotically optimal.

\end{abstract}
\maketitle


\makeatletter \providecommand\@dotsep{5} \makeatother



\vspace{0.5cm}
\begin{minipage}{14cm}
{\small
\begin{description}
\item[\rm \textsc{ Key words}]
{\small  Continuous-time Markov decision process, Mean field problem, Process limits, Pontryagin's maximum principle}
\item[\rm \textsc{AMS subject classifications}] 
{\small 90C40,60G55,60F17}

\end{description}
}
\end{minipage}

\section{Introduction}
We consider a finite number of $N$ statistically equal agents,  each moving on a finite set of states according to a continuous-time Markov Decision Process. Transition intensities of the agents and generated rewards can be controlled and depend not only on the state and action of the agent itself, but also on the states of the other agents. Interactions like this are typical for a wide range of models in e.g.\  biology, epidemics, finance, social science and queueing systems among others. The aim is to maximize the expected discounted reward of the system, i.e. the agents have to cooperate as a team. This can be implemented by a central controller who is able to observe the whole system and assigns actions to the agents. Though this system itself can be formulated as a continuous-time Markov Decision Process, the established solution procedures are not really practical since the state space of the system is complicated and of high cardinality. Thus, we consider the limit $N\to\infty$ when the number of agents tends to infinity and analyze the connection between the limiting optimization problem which is a deterministic control problem and the $N$ agents problem. 

Investigations like this are well-known under the name {\em Mean-field approximation}, because the mean dynamics of the agents can be approximated by differential equations for a measure-valued state process. This is inspired by statistical mechanics and can be done for different classes of stochastic processes for the agents. In our paper we restrict our investigation to continuous-time Markov chains (CTMC). Earlier, more practical studies in this spirit with CTMC, but {\em without control} are e.g.\ \cite{bortolussi2013continuous,kolesnichenko2014applying} 
which consider illustrating examples to discuss how the mean-field method is used in different application areas. The convergence proof there is based on the law of large numbers for centred Poisson processes, see also \cite{kurtz1970solutions}. \cite{ball2006asymptotic} look at so-called reaction networks which are chemical systems involving multiple reactions and
chemical species. They take approximations of multiscale nature into account and show that 'slow' components can be approximated by a deterministic equation. \cite{darling2008differential} formulate some simple conditions under which a CTMC may be approximated by the solution to a differential equation, with quantifiable error probabilities. They give different applications. \cite{aspirot2011fluid} explore models proposed for the analysis of BitTorrent P2P systems and provide the arguments to justify the passage from the stochastic process, under adequate scaling, to a fluid approximation driven by a differential equation. A more recent application is given in \cite{kyprianou2022replicator} where 
a multi-type analogue of Kingman’s coalescent as a death chain is considered. The aim is to characterize  the behaviour of the replicator coalescent as it is started  from an initial population that is arbitrarily large. This leads to a differential equation called the replicator equation. A similar control model as ours is considered in \cite{cecchin2021finite}. However, there the author uses a finite time horizon and solves the problem with HJB equations. This requires a considerable technical overhead like viscosity solutions and more assumptions on the model data like Lipschitz properties which we do not need here.

A related topic are fluid models.  Fluid models have been introduced in queueing network theory since  there is a close connection  between the stability of the stochastic network and the corresponding  fluid model, \cite{meyn1997stability}. They appear under 'fluid scaling' where time in the CTMC for the stochastic queueing network is accelerated by a factor $N$ and the state  is compressed by factor $1/ N.$ Fluid models have also been used to approximate the optimal control in these networks, see e.g.\ \cite{avram1995fluid,weiss1996optimal,bauerle2000asymptotic,bauerle2002optimal,vcudina2011asymptotically}. In \cite{yin2012continuous} different scales of time  are treated for the approximation and some components may be replaced by differential equations. But there is no mean-field interaction in all of these fluid models.

There are also investigations about controlled mean-field Markov decision processes and their limits in discrete time. An early paper is \cite{gast2012mean} where the mean-field limit for increasing number of agents is considered in a model where only  the central controller is allowed to choose one action. However, in order to get a continuous limit the authors have to interpolate and rescale the original discrete-time processes. This implies the necessity for assumptions on the transition probabilities. The authors show the convergence of the scaled value functions and derive asymptotically optimal strategies.
The recent papers \cite{carmona2019modelx,motte2022mean,motte2023quantitative,bauerle2023mean} discuss the convergence of value functions and asymptotically optimal policies in discrete time. In contrast to our paper they allow a common noise. The limit problem is then a controlled stochastic process in discrete-time. 

Another strand  of literature considers continuous-time mean-field {\em games} on a finite number of states \cite{gomes2013continuous,basna20141,bayraktar2018analysis,cecchin2020probabilistic,belak2021continuous}. These papers  among others consider the construction of asymptotically optimal Nash-equilibria from a limiting equation. The exception is \cite{cecchin2020probabilistic} where it is shown that any solution of the limiting game can be approximated by $\epsilon_N$-Nash equilibria in the $N$ player game. However, all these papers deal with the convergence of the HJB equations which appear in the $N$ player game to a limiting equation, called the Master equation (\cite{cardaliaguet2019master}) which  is a deterministic PDE for the value function. This approach needs sufficient regularity of the value functions and many assumptions. \cite{belak2021continuous} consider the problem with common noise and reduce the mean field equilibrium to a system of forward-backward systems of (random) ordinary differential equations.

The {\em contribution of our paper} is first to establish and investigate the limit of the controlled continuous-time Markov decision processes. In contrast to previous literature which works with the HJB equation this point of view requires less assumptions e.g.\ we do not need Lipschitz conditions on the model data.  
Second, we are also able to construct an asymptotically optimal strategy for the $N$ agents model. Our model is general, has only a few, easy to check assumptions and allows for various applications. The advantage of our limiting optimization problem is that we can apply Pontryagin's maximum principle easily which is often more practical than deterministic dynamic programming. Further, we show that an optimal feedback policy in the deterministic problem does not necessarily imply an asymptotically optimal policy for the $N$ agents problems. Third, we obtain a convergence rate in a straightforward way. Fourth, we can consider finite and infinite time horizon at the same time. There is essentially no difference. We restrict the presentation mainly to the infinite time horizon.

Our paper is organized as follows: In the next section we introduce our $N$ agents continuous-time Markov decision process. The aim is to maximize the expected discounted reward of the system. In Section \ref{sec:limitmodel} we introduce a measure-valued simplification which is due to the symmetry properties of the problem and which reduces the cardinality of the state space. The convergence theorem if the number of agents tends to infinity can be found in Section \ref{sec:convergenceproof}. It is essentially based on martingale convergence arguments. In Section \ref{sec:F} we construct a sequence of asymptotically optimal strategies from the limiting model for the $N$ agents model. We also show that different implementations may be possible and that the rate of convergence is at most $1/\sqrt{N}$. Finally in Section \ref{sec:appl} we discuss three applications. The first one is a machine replacement problem when we have many machines, see e.g. \cite{thompson1968optimal}. The second one is the spreading of malware which is based on the classical SIR model for spreading infections, \cite{khouzani2012maximum,gast2012mean}. The last example shows that one has to be careful with feedback policies.

\section{The $N$ agents continuous-time Markov Decision Process}\label{sec:model1}

We consider a finite number of $N$ statistically equal agents,  each moving on a finite set of states $S$ according to a continuous-time Markov Decision Process.  The vector $\mathbf{x}_t = (x_t^1,...,x_t^N)\in S^N$ describes the state of the system at time $t\in [0,\infty)$, where $x_t^k$ is the state of agent $k=1,\dots,N$. The action space of one agent is a compact Borel set $A$. The action space of the system is accordingly $A^N$. We denote an action of the system by $\mathbf{a} = (a^1,...,a^N)\in A^N$ where $a^k$ is the action chosen by agent $k=1,\ldots,N$.\\
Let $D(i)\subset A$ be the set of actions available for an agent in state $i\in S$ which we again assume to be compact. Then the set of admissible actions for the system in state $\mathbf{x} \in S^N$ is given by $\mathbf{D}(\mathbf{x}):= D(x^1)\times \dots \times D(x^N)\subset A^N$. The set of admissible state-action combinations for one agent is denoted by $D:= \{(i, a) \in S\times A \mid  a \in D(i), \ \forall \ i\in S\}$.
\\\\
For the construction of the system state process we follow the notation of \cite{piunovskiy2020continuous}. The state process of the system is defined on the measurable space $(\Omega,\mathcal{F}):= \big((S^N\times \R_+)^\infty,\mathcal{B}((S^N\times \R_+)^\infty)\big).$ We denote an element of $\Omega$ by $\omega=(\textbf{x}_0,t_1,\textbf{x}_1,t_2,...)$. Now define
\begin{align*}
   \tilde{\mathbf{X}}_n&:\Omega \to S^N,\quad  \tilde{\mathbf{X}}_n(\omega) = \textbf{x}_n,\quad n\in \N_0,\\
    \tau_n&:\Omega \to \R_+,\quad \  \tau_n(\omega) = t_n,\quad \  n\in \N,\\
    T_n &:= \sum_{k=1}^n \tau_k, \quad T_0 := 0.
\end{align*}
The controlled state process of the system is then given by
\begin{equation}
    \nonumber
    \textbf{X}_t := \sum_{n\in \N_0} \mathds{1}_{\{T_n\leq t< T_{n+1}\}} \tilde{\mathbf{X}}_n,\qquad t\in [0,\infty).
\end{equation}
The construction of the process can be interpreted as follows: The random variables $\tau_n$ describe the sojourn times of the system in states $\tilde{\mathbf{X}}_{n-1}$. Based on the sojourn times, $T_n$ describes the time of the $n$-th jump of the process and $\tilde{\mathbf{X}}_n$ the state of the process on the 
interval $[T_n,T_{n+1})$. By construction the continuous-time state process $(\mathbf{X}_t)$ has piecewise constant càdlàg-paths and the embedded discrete-time process is $(\tilde{\mathbf{X}}_n)$.\\\\
The system is controlled by policies. W.l.o.g.\ we restrict here to Markovian stationary policies. Further, we allow for randomized decisions, i.e.\ each agent can choose a probability distribution on $A$ as its action. Hence a policy for the system is given by a collection of $N$ stochastic kernels $\pi(d\mathbf{a}\mid \mathbf{x}) = (\pi^k(da\mid \mathbf{x}))_{k=1,...,N}$, where
\begin{equation*}
    \pi^k:S^N\times \mathcal{B}(A) \to [0,1], \quad (\mathbf{x},\mathcal{A}) \mapsto \pi^k(\mathcal{A}\mid \mathbf{x})\qquad \text{(kernel for agent $k$)}.
\end{equation*}
$\pi^k(\mathcal{A}\mid \mathbf{x})$ is the stochastic kernel (it is here considered as a relaxed control) with which agent $k$ chooses an action, given the state $\mathbf{x}$ of the system. Naturally, it should hold that the kernel is concentrated on admissible actions, i.e. $\pi^k(D(x^k) \mid \mathbf{x})= 1$ for all agents $k=1,...,N$.\\
The action process is thus defined by 
\begin{equation*}
    \pi_t := \sum_{n\in \N_0} \mathds{1}_{\{T_n< t\leq T_{n+1}\}} \pi(\cdot\mid \tilde{\mathbf{X}}_n),\qquad t\in [0,\infty).
\end{equation*}
In contrast to the state process, the action process has piecewise constant càglàd-paths. This means that a new decision can only be taken after a change of state has already occurred. The general theory on continuous-time Markov decision processes states that the optimal policy can be found among the piecewise constant, deterministic, stationary policies. In particular, varying the action continuously on the interval $[T_n,T_{n+1})$ does not increase the value of the problem. Also randomization does not increase the value, but in view of the sections to come, we already allowed for randomization (relaxation)  here. \\\\
To prepare the description of the transition mechanism in our model, we define the empirical distribution of the agents over the states, i.e.
\begin{equation}
    \nonumber
    \mu[\mathbf{x}] := \frac{1}{N} \sum_{k=1}^N \delta_{x^k}. 
\end{equation}
where $\delta_{x^k}$ is the Dirac measure in point $x^k$. The transition intensities for one agent are given by a signed kernel 
\begin{equation*}
    q:S\times A\times \PP(S)\times \mathcal{P}(S) \to \R,\quad (i,a,\mu,\Gamma)\mapsto q(\Gamma\mid i,a,\mu) = \sum_{j\in \Gamma} q(\{j\}\mid i,a,\mu).
\end{equation*}
Here $ \PP(S) $ is the set of all probability distributions on $S$ and $ \mathcal{P}(S)$ is the power set of $S.$
Note that the transition of an agent depends not only on its own state and action, but also on the empirical distribution of all agents over the states.\\\\
We make the following assumptions on $q$:
 \begin{itemize}
     \item[(Q1)] $q(\{j\}|i,a,\mu)\ge 0$ for all $i,j\in S,\ j\neq i, \ a \in D(i), \mu\in \PP(S).$
     \item[(Q2)] $\sum_{j\in S} q(\{j\}|i,a,\mu)=0$ for all $(i,a)\in D, \ \mu\in \PP(S).$
     \item[(Q3)] $\sup_{i,a,j,\mu} |q(\{j\}|i,a,\mu)|=: q_{max}<\infty.$
     \item[(Q4)] $\mu \mapsto q(\{j\}|i,a,\mu)$ is continuous w.r.t.\ weak convergence for all $i,j\in S,\ a\in D(i).$
     \item[(Q5)]  $a \mapsto q(\{j\}|i,a,\mu)$ is continuous for all $i,j\in S,\ \mu\in \PP(S).$
 \end{itemize}
Note that (Q3) follows from (Q4) and (Q5), but since it is important we list it here. Based on the transition intensities for one agent, the transition intensities of the system are given by
 \begin{equation}\label{eq:Qsystem}
     q(\{(x^1,\ldots,x^{k-1},j,x^{k+1},\ldots x^N)\} |\mathbf{x}, \mathbf{a} ) := q(\{j\}|x^k,a^k,\mu[\mathbf{x}])
 \end{equation} 
for all $(\mathbf{x}, \mathbf{a}) \in \mathbf{D}(\mathbf{x}), j\in S, j\neq x^k$ and 
$$ q(\{{\mathbf{x}}\}| \mathbf{x}, \mathbf{a}) := \sum_{k=1}^N q(\{ x^k\}|x^k,a^k,\mu[\mathbf{x}]).$$
All other intensities are $=0.$ The intensity in \eqref{eq:Qsystem} describes the transition of agent $k$ from state $x^k\in S$ to state $j\in S$, while all other agents stay in their current state. Since only one agent can change its state at a time, this definition is sufficient to describe the transition mechanism of the system.\\
Further we set (in a relaxed sense) for a decision rule $\pi^k(da|\mathbf{x})$
$$ q(\{(x^1,\ldots,x^{k-1},j,x^{k+1},\ldots ,x^N)\} |\mathbf{x}, \pi ) = \int_A q(\{j\}|x^k,a,\mu[\mathbf{x}])\pi^k(da|\mathbf{x}).$$
Note that in a certain sense there is abuse of notation here since we use the letter $q$ both for the agent transition intensity and for the system transition intensity. It should always be clear from the context which one is meant.\\\\
The probability measure of the $N$ agent process is now given by the following transition kernels
\begin{align*}
    \PP^\pi(\tau_n\le t, \tilde{\mathbf{X}}_n \in B| \tilde{\mathbf{X}}_{n-1}) =
      \int_0^t q(B| \tilde{\mathbf{X}}_{n-1},\pi) e^{s \cdot q (\{\tilde{\mathbf{X}}_{n-1}\}| \tilde{\mathbf{X}}_{n-1},\pi)}ds
\end{align*}
for all $t\geq0$ and $B\in \mathcal{P}(S^N).$ In particular, the sojourn times $\tau_n$ are exponentially distributed with parameter $-q (\{\tilde{\mathbf{X}}_{n-1}\}| \tilde{\mathbf{X}}_{n-1},\pi)$ respectively. Note that by using this construction, the probability measure depends on the chosen policy. This construction is more convenient when the transition intensities are given. In case the system is described by transition functions and external noise it is easier to use a common probability space which does not depend on the policy. Of course these two points of view are equivalent.
\\\\
Returning to the model's control mechanism, keep in mind that the policy of an agent $\pi^k(da\mid \mathbf{x})$ is allowed to depend on the state of the whole system, i.e. we assume that each agent has information about the position of all other agents. Therefore, we can interpret our model as a centralized control problem, where all information is collected and shared by a central controller.\\
The goal of the central controller is to maximize the social reward of the system. In order to implement this, we introduce the (stationary) reward function for one agent as 
\begin{equation}
    \nonumber
    r:D\times \PP(S)\to \R,\qquad (i,a,\mu) \mapsto r(i,a,\mu),
\end{equation}
which does not only depend on the state and action of the agent, but also on the empirical distribution of the system. We make the following assumptions on the reward function:
\begin{itemize}
    \item[(R1)]  For all $(i,a)\in D$  the function $\mu \mapsto \mu(i) r(i,a,\mu)$ is continuous w.r.t. weak convergence.
    \item[(R2)]  For all $i\in S$ and $\mu \in \PP(S)$  the function $a \mapsto r(i,a,\mu)$ is continuous.
\end{itemize}
Since the set of admissible actions $D(i)$ is compact, (R1) and (R2) imply that the following expression is bounded:
  \begin{equation}\label{eq:rbounded}
         \sup_{(i,a)\in D, \ \mu\in \PP(S)} |\mu(i) r(i,a,\mu)|<\infty.
    \end{equation}
The (social) reward of the system is the average of the agents' rewards
\begin{equation}
    \label{eq:reward1}
    r(\mathbf{x},\mathbf{a}):= \frac{1}{N} \sum_{k=1}^N r(x^k,a^k, \mu[\mathbf{x}]),
\end{equation}
or, in a relaxed sense for a decision rule $\pi^k(da\mid \mathbf{x})$
\begin{equation}
    \nonumber
    r(\mathbf{x},\pi):= \frac{1}{N} \sum_{k=1}^N \int_A r(x^k,a, \mu[\mathbf{x}]) \pi^k(da\mid \mathbf{x}).
\end{equation}
The aim is now to find the \textit{social optimum}, i.e.\ to maximize the joint expected discounted reward of the system over an infinite time horizon. For a policy $\pi$, a discount rate $\beta>0$ and an initial configuration $\mathbf{x}\in S^N$ define the value function
\begin{align}
    \nonumber
    V_\pi(\mathbf{x})&=  \EE_\mathbf{x}^\pi\Big[ \int_0^\infty e^{-\beta t} r(\mathbf{X}_t,\pi_t)dt\Big] \\\label{eq:valueV}
    V(\mathbf{x}) &= \sup_\pi V_\pi(\mathbf{x}).
\end{align}
We are not discussing solution procedures for this optimization problem here since we simplify it in the next section and present asymptotically optimal solution methods in Section \ref{sec:F}.

\section{The measure-valued continuous-time Markov Decision Process}\label{sec:limitmodel}
As $N$ is getting larger, so does the state space $S^N$, which could make the model increasingly complex and impractical to solve. Therefore, we seek for some simplifications. An obvious approach which is common for these kind of models, is to exploit the symmetry of the system by capturing not the state of every single agent, but the relative or empirical distribution of the agents across the $\vert S\vert$ states. \\
Thus, let $\mu_t^N := \mu[\mathbf{X}_t]$ and define as new state space the set of all distributions which are empirical measures of $N$ atoms
\begin{equation}
    \nonumber
    \PP_N(S) := \{\mu\in \PP(S)\mid \mu = \mu[\mathbf{x}], \text{ for } \mathbf{x} \in S^N\}.
\end{equation}
It holds that the new state process $\mu_t^N $ is the same as
$$ \mu_t^N = \sum_{n\in \N_0} \mathds{1}_{\{T_n \le t < T_{n+1}\}}\mu[ \tilde{\mathbf{X}}_n],\qquad t\in [0,\infty).$$
As action space take the $\vert S\vert$-fold Cartesian product $\PP(A)^{\vert S\vert}$ of $\PP(A)$. Hence, an action is given by $\vert S\vert$ probability measures $\alpha(d\mathbf{a}) = (\alpha^i(da))_{i\in S}$ with $\alpha^i(D(i)) = 1$. Hereby the $i$-th component indicates the distribution of the agents' actions in state $i\in S$. The set of admissible state-action combinations of the new model is given by $\hat{D} := \PP_N(S) \times \PP(A)^{\vert S \vert}$. \\\\
For the policies we restrict again to Markovian, stationary policies given by a collection of $\vert S \vert$ stochastic kernels $\hat\pi(d \mathbf{a}|\mu)= (\hat\pi^i(da|\mu))_{i\in S}$, where
\begin{equation*}
    \hat\pi^i:\PP_N(S)\times \mathcal{B}(A) \to [0,1], \quad (\mu,\mathcal{A}) \mapsto \hat\pi^i(\mathcal{A}\mid \mu)\qquad \text{(kernel for state $i$)}.
\end{equation*}
where $\hat\pi^i(D(i)\mid \mu)=1.$
 In what follows we denote $\tilde{\mu}_n^N := \mu[\tilde{\mathbf{X}}_n]$. Then we can express the action process by setting
\begin{eqnarray}\label{eq:actionprocess2}
    \hat\pi_t := \sum_{n\in \N_0} \mathds{1}_{\{T_n < t \le  T_{n+1}\}} \hat\pi(\cdot| \tilde{\mu}_n^N),\qquad t\in [0,\infty).
\end{eqnarray}
The transition intensities of the process $(\mu_t^N)_{t\geq 0}$ are given by
\begin{equation}\label{eq:qtermsystem}q(\{\mu^{i\to j}\}| \mu,\alpha )= N\mu(i) \int_A q(\{j\}| i,a,\mu)  \alpha^i(da), \quad \mu \in \PP_N(S),   \alpha \in \PP(A)^{\vert S \vert}, \end{equation}
with $\mu^{i\to j}:= \mu-\frac1N \delta_{i}+\frac1N \delta_{j}$ for all $i,j\in S, i\neq j$ if $\mu(i)>0.$ This intensity describes the transition of one arbitrary agent in state $i\in S$ to state $j\in S$, while all other agents stay in their current state. Note that the intensity follows from the usual calculations for continuous-time Markov chains, in particular from the fact that if $X,Y$ are independent random variables with $X\sim Exp(\lambda), Y\sim Exp(\nu),$ then $X\wedge Y \sim Exp(\lambda+\nu).$ In the situation in \eqref{eq:qtermsystem} we have $N\mu(i)$ agents in state $i$. Further we set for all $\mu\in \PP_N(S)$ and $\alpha \in \PP(A)^{\vert S\vert}$
$$q(\{\mu\}| \mu,\alpha ):= -\sum_{i, \mu(i)>0}\sum_{j\neq i}q(\{\mu^{i\to j}\}| \mu,\alpha ). $$
All other intensities are zero, since again only one agent can change its state at a time. \\\\
The probability distribution of the measure-valued process under a fixed policy $\hat\pi$ is now given by the following transition kernels
\begin{align*}
    \PP^{\hat\pi}(\tau_n\le t, \tilde{\mu}_n^N \in B| \tilde{\mu}_{n-1}^N) =
      \int_0^t q(B| \tilde{\mu}_{n-1}^N,\hat\pi) e^{s\cdot q (\{\tilde{\mu}_{n-1}^N\}| \tilde{\mu}_{n-1}^N,\hat\pi)}ds
\end{align*}
for all $t\geq0$ and $B\subset \PP_N(S)$ measurable, where the random variables $(\tau_n)$ are the same as before.\\\\
The reward function of the system is derived from the reward for one agent:
\begin{equation}
    \nonumber
    r(\mu,\alpha):= \sum_{i\in S} \int r(i,a,\mu) \alpha^i(da) \mu(i).
    \end{equation}
 In view of \eqref{eq:rbounded}  $r(\mu,\alpha)$ is bounded.
The aim in this model is again to maximize the joint expected discounted reward of the system over an infinite time horizon. For a policy $\hat{\pi}$, a discount rate $\beta>0$ and an initial configuration $\mu\in \PP_N(S)$ define the value function
\begin{eqnarray}\nonumber
V_{\hat\pi}^N(\mu) &=&  \EE_{\mu}^{\hat\pi}\Big[ \int_0^\infty e^{-\beta t} r(\mu_t^N,\hat\pi_t)dt\Big] \\ \label{prob:II}
 V^N(\mu) &=& \sup_{\hat\pi} V_{\hat\pi}^N(\mu).
 \end{eqnarray}
We can now show that both formulations \eqref{eq:valueV} and \eqref{prob:II} are equivalent in the sense that the optimal values are the same. Of course, an optimal policy in the measure-valued setting can directly be implemented in the original problem. The advantage of the measure-valued formulation is the reduction of the cardinality of the state space. Suppose for example that $S=\{0,1\}$, i.e. all agents are either in state 0 or state 1. Then $|S^N|=2^N$ in the original formulation whereas $|\PP_N(S)|=N+1$ in the second formulation. A proof of the next theorem can be found in the appendix.
 
 \begin{theorem}\label{theo:equivalence}
    It holds that $V(\mathbf{x})=V^N(\mu)$ for $\mu=\mu[\mathbf{x}]$ for all $\mathbf{x}\in S^N.$
\end{theorem}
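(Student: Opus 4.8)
The plan is to establish the equivalence $V(\mathbf{x})=V^N(\mu[\mathbf{x}])$ by exploiting the statistical symmetry of the $N$ agents, which makes the system genuinely a function of the empirical measure $\mu[\mathbf{x}]$ alone. First I would observe that both the system transition intensities in \eqref{eq:Qsystem} and the reward \eqref{eq:reward1} are invariant under permutations of the agent labels: relabeling the coordinates of $\mathbf{x}$ and $\mathbf{a}$ simultaneously leaves $q$ and $r$ unchanged, because they only see $(x^k,a^k)$ and the empirical distribution $\mu[\mathbf{x}]$, which is itself permutation-invariant. This symmetry is the conceptual engine behind the whole reduction.

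The natural strategy is to prove two inequalities. For $V(\mathbf{x})\ge V^N(\mu)$, I would take any measure-valued policy $\hat\pi$ and lift it to a system policy $\pi$ by setting, for each agent $k$, the kernel $\pi^k(\cdot\mid\mathbf{x}):=\hat\pi^{x^k}(\cdot\mid\mu[\mathbf{x}])$; that is, every agent currently in state $i$ uses the common state-$i$ kernel prescribed by $\hat\pi$. One then checks that under this $\pi$ the induced law of the process $(\mu_t^N)=(\mu[\mathbf{X}_t])$ coincides with the law generated by $\hat\pi$ via the intensities \eqref{eq:qtermsystem}: the rate at which \emph{some} agent jumps from $i$ to $j$ is the sum over the $N\mu(i)$ agents in state $i$ of their individual rates $\int_A q(\{j\}\mid i,a,\mu)\,\hat\pi^i(da\mid\mu)$, which is exactly $N\mu(i)\int_A q(\{j\}\mid i,a,\mu)\,\alpha^i(da)$ with $\alpha=\hat\pi(\cdot\mid\mu)$. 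Since $r(\mathbf{X}_t,\pi_t)=r(\mu_t^N,\hat\pi_t)$ pointwise by definition of the two reward functions, the discounted objectives agree, giving $V_\pi(\mathbf{x})=V^N_{\hat\pi}(\mu)$ and hence the inequality after taking suprema.

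For the reverse inequality $V(\mathbf{x})\le V^N(\mu)$, the idea is to \emph{symmetrize} an arbitrary system policy $\pi$. Given $\pi$, I would define a measure-valued policy $\hat\pi$ by averaging over agents in the same state: concretely, set $\hat\pi^i(\cdot\mid\mu):=\frac{1}{N\mu(i)}\sum_{k:\,x^k=i}\pi^k(\cdot\mid\mathbf{x})$ for a representative $\mathbf{x}$ with $\mu[\mathbf{x}]=\mu$. Because the marginal law of $\mu_t^N$ only depends on these state-aggregated kernels (the transition intensities \eqref{eq:qtermsystem} depend on $\pi$ solely through such averages), and because the reward likewise depends only on the averaged action distribution in each occupied state, the value is unchanged, $V_\pi(\mathbf{x})=V^N_{\hat\pi}(\mu)$, yielding $V(\mathbf{x})\le V^N(\mu)$. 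Alternatively, one can invoke the general structural result (mentioned in the text) that an optimal policy for the system MDP can be chosen among permutation-symmetric ones, so that $\pi^k$ depends only on $x^k$ and $\mu[\mathbf{x}]$, collapsing directly to a measure-valued policy.

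The step I expect to be the main obstacle is the rigorous verification that the two constructions produce \emph{identical laws} for the marginal process $(\mu_t^N)_{t\ge 0}$, rather than merely matching intensities formally. One must argue that $(\mu_t^N)$ is itself Markov under the lifted or symmetrized policy, that the sojourn-time and jump-distribution kernels built from \eqref{eq:Qsystem} push forward exactly to those built from \eqref{eq:qtermsystem} under the map $\mathbf{x}\mapsto\mu[\mathbf{x}]$, and that the combinatorial bookkeeping for $\mu^{i\to j}$ (the factor $N\mu(i)$ counting indistinguishable agents) is accounted for correctly. The delicate point is confirming that projecting a $\mathbf{D}(\mathbf{x})$-valued process onto empirical measures preserves the full finite-dimensional distributions, which I would handle by identifying the generator of $(\mu_t^N)$ directly and appealing to uniqueness of the associated jump process, with boundedness \eqref{eq:rbounded} and assumption (Q3) guaranteeing no explosion so that the discounted integral is well defined and finite.
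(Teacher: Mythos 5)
Your proposal is correct and follows essentially the same route as the paper: the key construction in both is the state-wise averaging $\hat\pi^i(da\mid\mu)=\frac{1}{N\mu(i)}\sum_{k:x^k=i}\pi^k(da\mid\mathbf{x})$, followed by verifying that rewards and aggregate $i\to j$ transition intensities coincide, with the fully rigorous identification of the laws of $(\mu_t^N)$ deferred (the paper points to an induction as in B\"auerle--Rieder-type arguments, you to a generator/uniqueness argument). Your explicit treatment of the lifting direction $V(\mathbf{x})\ge V^N(\mu)$ is handled in the paper only by the remark that measure-valued policies can be implemented directly, but the substance is the same.
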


\begin{remark}
    It is possible to extend the previous result to a situation where reward and transition intensity both also depend on the empirical distribution of actions, e.g. \cite{motte2022mean}. However, due to the definition of the Young topology which we use later it is not possible to transfer the convergence results to this setting.
\end{remark}
The problem we have introduced is a classical continuous-time Markov Decision Process and can be solved with the established theory accordingly. Thus, we obtain:

\begin{theorem}\label{theo:sol_classic}
    There exists a continuous function $v:\PP_N(S)\to\R$ satisfying
    $$\beta v(\mu) = \sup_{\alpha\in\PP(A)^{|S|}} \left\{ r(\mu,\alpha) + \int v(\nu) q(d\nu|\mu,\alpha) \right\} $$
    for all $\mu\in \PP_N(S)$ and there exists a maximizer $\hat\pi(\cdot|\mu)$ of the r.h.s.\ such that $v=V^N$ and $\hat\pi$ determines the optimal policy by \eqref{eq:actionprocess2}.
\end{theorem}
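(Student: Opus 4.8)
The plan is to exploit the two structural simplifications that the measure-valued reformulation provides: the state space $\PP_N(S)$ is \emph{finite} and the total jump rate out of any state is \emph{bounded}. Because $\PP_N(S)$ is finite, every real-valued function on it is automatically continuous, so the continuity claim is vacuous and the real content of the theorem is the existence of a fixed point of the Bellman equation together with a verification that it equals $V^N$ and is realized by a stationary maximizer. My strategy is to reduce the discounted continuous-time problem to an equivalent discounted discrete-time Markov decision problem by uniformization, solve the latter by a contraction argument, and transfer the solution back.

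Concretely, I would first bound the exit rates. By (Q3) each agent rate satisfies $|q(\{j\}\mid i,a,\mu)|\le q_{max}$, and since at most $|S|(|S|-1)$ ordered pairs contribute and $N\mu(i)\le N$, the exit rate $-q(\{\mu\}\mid\mu,\alpha)$ is bounded uniformly in $\mu,\alpha$ by a finite constant; fix $\Lambda$ at least this large. Adding $\Lambda v(\mu)$ to both sides of the Bellman equation and dividing by $\beta+\Lambda$ recasts it as
\[
 v(\mu)=\sup_{\alpha}\Big\{\tilde r(\mu,\alpha)+\gamma\sum_{\nu}v(\nu)P(\nu\mid\mu,\alpha)\Big\},
\]
with discount factor $\gamma=\Lambda/(\beta+\Lambda)\in(0,1)$, normalized reward $\tilde r=r/(\beta+\Lambda)$, and stochastic matrix $P(\nu\mid\mu,\alpha)=q(\{\nu\}\mid\mu,\alpha)/\Lambda$ for $\nu\neq\mu$ and $P(\mu\mid\mu,\alpha)=1+q(\{\mu\}\mid\mu,\alpha)/\Lambda\ge 0$. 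The associated Bellman operator on $\R^{\PP_N(S)}$ equipped with the supremum norm is a contraction with modulus $\gamma$, because $P(\cdot\mid\mu,\alpha)$ is a probability distribution; Banach's fixed point theorem then yields a unique fixed point $v$.

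It remains to produce a maximizer and to identify $v$ with $V^N$. For the maximizer I would check that, for fixed $\mu$, the map $\alpha\mapsto r(\mu,\alpha)+\int v(\nu)\,q(d\nu\mid\mu,\alpha)$ is continuous on the compact set $\PP(A)^{|S|}$: each inner integral $\int_A q(\{j\}\mid i,a,\mu)\,\alpha^i(da)$ and $\int_A r(i,a,\mu)\,\alpha^i(da)$ is the integral of a bounded continuous integrand—boundedness and continuity in $a$ coming from (Q3), (Q5) and from (R2) on the compact set $D(i)$—hence is weakly continuous in $\alpha^i$. Weierstrass' theorem then guarantees that the supremum is attained; since $\PP_N(S)$ is finite, a maximizing kernel $\hat\pi(\cdot\mid\mu)$ is obtained simply by selecting an optimizer at each of the finitely many states, with no measurable-selection machinery needed. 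Finally, a standard verification argument for discounted Markov decision processes—using that $\gamma<1$ (equivalently $\beta>0$) forces the discounted tails to vanish—shows that $v$ dominates $V^N_{\hat\pi'}$ for every policy $\hat\pi'$ and is attained by the stationary policy generated by $\hat\pi$ through \eqref{eq:actionprocess2}, so that $v=V^N$ and $\hat\pi$ is optimal; this can be quoted directly from the general theory of continuous-time MDPs in \cite{piunovskiy2020continuous}.

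The routine parts are the contraction estimate and the uniformization bookkeeping. The step deserving the most care is the continuity of the objective in $\alpha$, since this is exactly where assumptions (Q3), (Q5) and (R2) enter and where the compactness of $\PP(A)^{|S|}$ in the topology of weak convergence must be matched against weak continuity of the integrals. Once this is in place, the existence of the maximizer and the identification of the fixed point with the value function follow from the established theory.
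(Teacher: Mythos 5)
Your proposal is correct. Note, however, that the paper does not actually prove this theorem: it simply cites Theorem 4.6 and Lemma 4.4 of \cite{guo2009continuous} and Theorem 3.1.2 of \cite{piunovskiy2020continuous}. What you have written is essentially a self-contained reconstruction of how those cited results are obtained in the bounded-rate, finite-state setting, namely by uniformization: your bookkeeping (adding $\Lambda v$, dividing by $\beta+\Lambda$, checking that $P(\cdot\mid\mu,\alpha)$ is stochastic because of (Q1), (Q2) and the choice of $\Lambda$) is right, the contraction and Weierstrass arguments are sound, and since $\PP_N(S)$ is finite the continuity claim and the selection of a maximizer are indeed trivial. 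Two small points are worth making explicit if you write this out: first, the admissible action set for state $\mu$ is not all of $\PP(A)^{|S|}$ but $\{\alpha: \alpha^i(D(i))=1 \text{ for all } i\}$, which is weakly closed (hence compact) because each $D(i)$ is closed, and it is on this set that Weierstrass is applied; second, your uniformization constant $\Lambda$ grows linearly in $N$, which is harmless here since $N$ is fixed, but should be flagged so the reader does not confuse this with the $N\to\infty$ analysis of the later sections. The one step you still quote rather than prove --- the identification of the fixed point of the uniformized discrete-time equation with the continuous-time value $V^N$ over all (stationary Markov, randomized) policies --- is exactly the content the paper delegates to \cite{piunovskiy2020continuous}, so your proof is no less complete than the paper's own treatment.
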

Follows from Theorem 4.6, Lemma 4.4 in \cite{guo2009continuous} or Theorem 3.1.2 in \cite{piunovskiy2020continuous}.\\

Theorem \ref{theo:sol_classic} implies a solution method for problem \eqref{prob:II}. It can e.g.\ be solved by value or policy iteration. However, as already discussed, even in this simplified setting, the computation may be inefficient if $N$ is large, since this leads to a large state space. 

\section{Convergence of the state process}\label{sec:convergenceproof}
In this section we discuss the behaviour of the system when the number of agents tends to infinity. In this case we obtain a deterministic limit control model which serves as an asymptotic upper bound for our optimization problem with $N$ agents. Moreover, an optimal control of the limit model can be used to establish a sequence of asymptotically optimal policies for the $N$ agents model.

In what follows we consider $(\mu_t^N)$  as a stochastic element of $D_{\PP_N(S)}[0,\infty)$, the space of c\`adl\`ag paths with values in $\PP_N(S)$  equipped with the Skorokhod $J_1$-topology and metric $d_{J_1}.$ On $\PP_N(S)$ we choose the total variation metric. 

Further, we consider $\hat\pi^i$ as a stochastic element in $\mathcal{R}:= \{\rho:\R_+\to \PP(A)\ |\ \rho \mbox{ measurable}\}$ endowed with the Young topology (cf. \cite{davis2018markov}). It is possible to show that $\mathcal{R}$ is compact and metrizable. Measurability and convergence in $\mathcal{R}$ can be characterized as in Lemma \ref{lem:Young}. These statements follow directly from the fact that the Young topology is the coarsest topology such that the mappings
$$ \rho \mapsto \int_0^\infty \int_A \psi(t,a) \rho_t(da)dt$$
are continuous for all real functions $\psi$ on $\R_+\times A$ where $\psi$ is a Carath\'eodory function, i.e. $\psi$ is continuous in $a$ and measurable in $t$ where $\psi$ is integrable in the sense that $\int_0^\infty \sup_a |\psi(t,a)|dt <\infty.$

\begin{lemma}\label{lem:Young}
    \begin{itemize}
        \item[a)] $\rho:\R_+\to \PP(A)$ is measurable if and only if $\rho$ is a transition probability from $\R_+$ into $A$.
        \item[b)] Let $\rho^n,\rho\in\mathcal{R}.$ $\rho^n\to \rho$ for $n\to\infty$ if and only if 
        $$ \int_0^\infty \int_A \psi(t,a) \rho_t^n(da)dt \to  \int_0^\infty \int_A \psi(t,a) \rho_t(da)dt $$
        for all measurable functions $\psi:\R_+\times A\to\R$ such that $a\mapsto \psi(t,a)$ is continuous for all $t\ge 0$ and $\int_0^\infty \sup_a |\psi(t,a)|dt <\infty.$
    \end{itemize}
\end{lemma}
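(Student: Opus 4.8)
The plan is to handle the two parts independently: part (a) is a standard measure-theoretic identification of $\PP(A)$-valued measurable maps with stochastic kernels, while part (b) is essentially a reformulation of the defining (initial-topology) property of the Young topology.

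For part (a) I would first recall the classical description of the Borel structure of $\PP(A)$. Since $A$ is compact metric, $\PP(A)$ with the weak topology is compact and metrizable, and its Borel $\sigma$-algebra is the smallest $\sigma$-algebra making all evaluations $\mu\mapsto\int_A f\,d\mu$ ($f\in C(A)$) measurable, equivalently all evaluations $\mu\mapsto\mu(B)$ ($B\in\mathcal{B}(A)$) measurable. Granting this, the two directions are short. If $\rho$ is Borel measurable, then for each fixed $B$ the map $t\mapsto\rho_t(B)$ is the composition of $\rho$ with the Borel map $\mu\mapsto\mu(B)$, hence measurable, so $\rho$ is a transition probability (the probability-measure property in $a$ being built into $\rho_t\in\PP(A)$). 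Conversely, if $t\mapsto\rho_t(B)$ is measurable for every Borel $B$, then $t\mapsto\int_A f\,d\rho_t$ is measurable for every simple $f$, hence for every bounded measurable $f$ by monotone approximation, in particular for every $f\in C(A)$; since these evaluations generate the Borel $\sigma$-algebra of $\PP(A)$, the map $\rho$ is measurable.

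For part (b) I would invoke the universal property that defines the topology. Writing
$$\Phi_\psi(\rho):=\int_0^\infty\int_A\psi(t,a)\,\rho_t(da)\,dt,$$
the Young topology is by construction the coarsest topology on $\mathcal{R}$ making every $\Phi_\psi$ continuous, where $\psi$ ranges over the Carath\'eodory functions with $\int_0^\infty\sup_a|\psi(t,a)|\,dt<\infty$ (it is exactly this integrability bound that guarantees $\Phi_\psi$ is finite and well defined). The ``only if'' direction is then immediate from continuity of each $\Phi_\psi$. For the ``if'' direction I would use the standard fact about initial topologies: a basic neighbourhood of $\rho$ has the form $\bigcap_{k=1}^m\Phi_{\psi_k}^{-1}(U_k)$ with finitely many admissible $\psi_k$ and open sets $U_k\ni\Phi_{\psi_k}(\rho)$; if $\Phi_\psi(\rho^n)\to\Phi_\psi(\rho)$ for every admissible $\psi$, then for each such neighbourhood one has $\Phi_{\psi_k}(\rho^n)\in U_k$ for all large $n$ and all $k$, so $\rho^n$ eventually lies in the neighbourhood and hence $\rho^n\to\rho$. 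Metrizability of $\mathcal{R}$, already noted, makes this sequential characterization faithful to the topology.

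The only genuinely non-formal step is the measure-theoretic identification behind part (a) --- in particular upgrading measurability of the continuous-test-function integrals to measurability of $t\mapsto\rho_t(B)$ for arbitrary Borel $B$ --- and for this I would either spell out the monotone-class argument or simply cite a standard reference on transition kernels into compact (Polish) spaces. Part (b), by contrast, is pure topological bookkeeping once the defining property of the Young topology is in hand; the only point requiring care is checking that the integrability hypothesis on $\psi$ keeps all the functionals $\Phi_\psi$ finite, so that the argument is meaningful.
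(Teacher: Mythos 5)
Your proposal is correct and matches the paper's (very brief) treatment: the paper gives no dedicated proof of this lemma, merely asserting that both parts follow from the defining property of the Young topology as the coarsest topology making the functionals $\rho \mapsto \int_0^\infty\int_A \psi(t,a)\rho_t(da)dt$ continuous, with a citation to the standard reference. Your write-up simply fills in the routine details --- the generating property of the Borel $\sigma$-algebra on $\PP(A)$ for part (a) and the net-convergence characterization of initial topologies for part (b) --- and both steps are sound.
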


In a first step we define for $N\in\N$, a fixed policy $\hat\pi^N$ and arbitrary $j\in S$, the one-dimensional process
\begin{align*}
    M_t^N(j)&:=\mu_t^N(j)-\mu_0^N(j)-\int_0^t \sum_{\nu\in \PP_N(S)} (\nu(j)-\mu_s^N(j)) q (\{\nu\}|\mu_s^N,\hat\pi_s)ds.
\end{align*} 
Then $(M_t^N(j))$ are martingales w.r.t.\ the filtration $\mathcal{F}_t^N = \sigma(\mu_s^N,s\le t).$ This follows from the Dynkin formula, see e.g.\ \cite{davis2018markov}, Proposition 14.13. Next we can express the process $(M_t^N(j))$ a bit more explicitly. Note that the difference   $\nu(j)-\mu_s^N(j)$ can either be $-1/N$ if an agent changes from state $j$ to a state $k\neq j$ or it could be $1/N$ if an agent changes from state $i\neq j$  to state $j$. Since by (Q2)
\begin{equation}\label{eq:qconservative} \sum_{k\neq j} \int  q (\{k\}|j,a,\mu_s^N) \hat\pi^{N,j}_s(da) = - \int  q (\{j\}|j,a,\mu_s^N) \hat\pi_s^{N,j}(da) \end{equation}
we obtain by inserting the intensity \eqref{eq:qtermsystem} and by using \eqref{eq:qconservative}
\begin{align}\nonumber
    M_t^N(j)=&\mu_t^N(j)-\mu_0^N(j)-\int_0^t \sum_{k\neq j} -\frac1N N \mu_s^N(j) \int q (\{k\}| j,a,\mu_s^N)\hat\pi^{N,j}_s(da)ds\\ \nonumber
& -\int_0^t \sum_{i\neq j} \frac1N N \mu_s^N(i)\int  q (\{j\}|i,a,\mu_s^N) \hat\pi_s^{N,i}(da)ds
    \\  \label{eq:Martingal}=&
   \mu_t^N(j)-\mu_0^N(j)-\int_0^t \sum_{i\in S} \mu_s^N(i)\int  q (\{j\}|i,a,\mu_s^N) \hat\pi_s^{N,i}(da)ds.
\end{align} 
With this representation we can prove that the sequence of stochastic processes $(M^N(j))$ converges weakly (denoted by $\Rightarrow$) in the Skorokhod $J_1$-topology to the zero process. The proof of this lemma together with the proof of the next theorem can be found in the appendix.

\begin{lemma}\label{lem:martingale} We have for all $j\in S $ that
$$ (M_t^N(j))_{t\ge 0} \Rightarrow 0, \quad N\to\infty.$$
\end{lemma}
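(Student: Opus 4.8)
The plan is to exploit the explicit martingale representation \eqref{eq:Martingal} together with an $L^2$-estimate on the predictable quadratic variation, which is made possible by the two structural features of the model: every jump of $(\mu_s^N)$ moves the $j$-th coordinate by at most $1/N$, while the total jump intensity is of order $N$. First I would record that, for each fixed $N$, the state space $\PP_N(S)$ is finite and $\mu_t^N(j)\in[0,1]$, so $(M_t^N(j))$ is a square-integrable, pure-jump martingale which is null at $t=0$ since $M_0^N(j)=\mu_0^N(j)-\mu_0^N(j)=0$. Writing $f(\mu):=\mu(j)$, the expression in \eqref{eq:Martingal} is precisely the Dynkin martingale $f(\mu_t^N)-f(\mu_0^N)-\int_0^t\mathcal{L}_s f(\mu_s^N)\,ds$ associated with the controlled generator $\mathcal{L}_s$, so its predictable quadratic variation is given by the carr\'e-du-champ identity $\mathcal{L}f^2-2f\mathcal{L}f=\sum_\nu (f(\nu)-f(\mu))^2\,q(\{\nu\}\,|\,\mu,\cdot)$, namely
\[
 \langle M^N(j)\rangle_t=\int_0^t\sum_{\nu\in\PP_N(S)}\big(\nu(j)-\mu_s^N(j)\big)^2\,q(\{\nu\}\,|\,\mu_s^N,\hat\pi_s)\,ds.
\]

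Second, I would bound this quantity. The only transitions $\mu\to\mu^{i\to k}$ that change the $j$-th coordinate are those for which exactly one of $i,k$ equals $j$, and for these $\big(\mu^{i\to k}(j)-\mu(j)\big)^2=1/N^2$. Inserting the intensity \eqref{eq:qtermsystem} and using (Q2), (Q3) together with $\mu(i)\le 1$ and $\sum_{i\in S}\mu(i)=1$, the sum $\sum_{k\neq j}q(\{\mu^{j\to k}\}|\mu,\hat\pi)$ and the sum $\sum_{i\neq j}q(\{\mu^{i\to j}\}|\mu,\hat\pi)$ are each bounded by $N q_{max}$. Hence
\[
 \langle M^N(j)\rangle_t\le \frac{1}{N^2}\cdot 2N q_{max}\cdot t=\frac{2q_{max}}{N}\,t.
\]

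Third, since $\EE[(M_t^N(j))^2]=\EE[\langle M^N(j)\rangle_t]$ for a square-integrable martingale null at $0$, Doob's $L^2$ maximal inequality yields, for every fixed $T>0$,
\[
 \EE\Big[\sup_{t\le T}(M_t^N(j))^2\Big]\le 4\,\EE[\langle M^N(j)\rangle_T]\le \frac{8q_{max}}{N}\,T\;\xrightarrow[N\to\infty]{}\;0.
\]
Thus $\sup_{t\le T}|M_t^N(j)|\to 0$ in $L^2$, and in particular in probability, for every $T$. Because the limit is the \emph{continuous} zero process, uniform convergence on compact time intervals in probability is equivalent to convergence in the metric $d_{J_1}$ in probability, which in turn implies the claimed weak convergence $M^N(j)\Rightarrow 0$.

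I expect the two genuinely delicate points to be the carr\'e-du-champ computation of $\langle M^N(j)\rangle$ and the verification that the total intensity is of order $N$ (so that the $1/N^2$ jump contribution survives only as $1/N$); once these are in place, the $L^2$/Doob estimate and the passage to $J_1$-weak convergence via continuity of the limit are routine.
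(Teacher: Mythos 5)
Your proof is correct and follows essentially the same route as the paper: bound $\E[\langle M^N(j)\rangle_t]$ by the squared jump size $1/N^2$ times the $O(N)$ total jump intensity to get an $O(1/N)$ estimate for $\E[(M_t^N(j))^2]$, then apply Doob's $L^2$ maximal inequality. The only point of divergence is the final passage to $J_1$-weak convergence: the paper verifies convergence of finite-dimensional distributions plus Aldous' tightness criterion and cites Pollard, whereas you observe that locally uniform convergence in probability to the continuous, deterministic zero process already implies $J_1$-convergence in probability and hence weak convergence --- a valid and in fact more direct conclusion.
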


Next we show that an arbitrary state-action process sequence is relatively compact which implies the existence of converging subsequences.

\begin{theorem}   \label{theo:subsequence}     
        A sequence of arbitrary state-action  processes $(\mu^N,\hat\pi^N)_N$ is relatively compact. Thus, there exists a subsequence $(N_k)$ which converges weakly 
        $$(\mu^{N_k},\hat\pi^{N_k}) \Rightarrow (\mu,\hat\pi), \mbox{ for } k\to\infty.$$
        Moreover, the limit  $(\mu,\hat\pi)$  satisfies 
        \begin{itemize}
            \item[a)] $(\mu_t) $ has a.s.\ continuous paths,
            \item[b)] and for each component $j$ we have 
            $$\mu_t(j) = \mu_0(j) + \int_0^t \sum_{i\in S} \mu_s(i) \int q(\{j\}|i,a,\mu_s) \hat\pi^{i}_s(da)ds. $$
        \end{itemize}
\end{theorem}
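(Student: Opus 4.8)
The plan is to establish the theorem in two parts: first relative compactness of the sequence $(\mu^N, \hat\pi^N)_N$, and then the identification of any subsequential limit via the martingale representation \eqref{eq:Martingal}.

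For relative compactness, I would treat the two coordinates separately and then combine. The action component $\hat\pi^N$ lives in $\mathcal{R}^{|S|}$, which the excerpt has already asserted to be compact and metrizable under the Young topology; hence this factor is automatically relatively compact, and any sequence has a convergent subsequence. For the measure component $(\mu^N)$ in $D_{\PP_N(S)}[0,\infty)$ under the $J_1$-topology, I would verify the standard Jakubowski/Aldous-type tightness criteria. The key observations are that the state space $\PP(S)$ is compact (so the compact-containment condition is trivially satisfied, since $\mu^N_t$ always takes values in the compact simplex $\PP(S)$), and that the jumps of $\mu^N$ have size $1/N \to 0$ in total variation, while by (Q3) the total jump intensity out of any state is bounded by $|S|^2 q_{\max}$ uniformly. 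These two facts control the oscillation modulus: in any short time window the process makes only $O(1)$ jumps of size $1/N$, so the paths cannot oscillate wildly. I would make this precise using the Aldous criterion on stopping times, bounding $\PP(|\mu^N_{\tau + \delta}(j) - \mu^N_\tau(j)| > \epsilon)$ via the bounded-intensity Poisson-type estimate. Combining compactness of each coordinate gives relative compactness of the pair, and hence a weakly convergent subsequence $(\mu^{N_k}, \hat\pi^{N_k}) \Rightarrow (\mu, \hat\pi)$.

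For part a), the continuity of the limit paths follows because the jumps of $\mu^N$ are uniformly bounded by $1/N$; since $\max_t |\mu^N_t - \mu^N_{t^-}| \le 1/N \to 0$, the limit in $J_1$ cannot have jumps, so $(\mu_t)$ is a.s.\ continuous. I would invoke the standard fact (e.g.\ via the continuity of the maximal-jump functional on $D$) that a $J_1$-limit of processes with vanishing maximal jump size has continuous paths. For part b), I would pass to the limit in the martingale identity \eqref{eq:Martingal}, rewritten as
\begin{equation}\nonumber
\mu_t^{N_k}(j) - \mu_0^{N_k}(j) - \int_0^t \sum_{i\in S} \mu_s^{N_k}(i) \int_A q(\{j\}|i,a,\mu_s^{N_k}) \hat\pi_s^{N_k,i}(da)\,ds = M_t^{N_k}(j).
\end{equation}
By Lemma \ref{lem:martingale} the right-hand side converges weakly to $0$, while $\mu_t^{N_k}(j) - \mu_0^{N_k}(j) \Rightarrow \mu_t(j) - \mu_0(j)$ by the continuous-mapping theorem (using that evaluation at a fixed time is a.s.\ continuous for the limit, thanks to part a)). The crux is the convergence of the drift integral, and this is the step I expect to be the main obstacle.

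The difficulty in the drift term is that it couples the measure coordinate and the action coordinate nonlinearly through $q(\{j\}|i,a,\mu^{N_k}_s)$, where the two converge in different topologies ($J_1$ for $\mu$, Young for $\hat\pi$). I would argue that the joint map
$$(\mu, \rho) \mapsto \int_0^t \sum_{i\in S} \mu_s(i) \int_A q(\{j\}|i,a,\mu_s)\,\rho^i_s(da)\,ds$$
is continuous for the product topology, so that the continuous-mapping theorem applies along the convergent subsequence. The integrand $\psi(s,a) = \mu_s(i)\, q(\{j\}|i,a,\mu_s)$ is continuous in $a$ by (Q5), dominated by $|S|\,q_{\max}$ and hence integrable against $e^{-\beta s}$ (or on $[0,t]$ directly), which is exactly the class of test functions in Lemma \ref{lem:Young}b); the continuity in $\mu_s$ comes from (Q4). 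One must handle the fact that the test function itself depends on the random $\mu^{N_k}$ rather than being fixed, which I would resolve either by a Skorokhod representation to work almost surely and then apply a joint-continuity estimate, splitting $|q(\cdot|i,a,\mu^{N_k}_s) - q(\cdot|i,a,\mu_s)|$ using uniform continuity on the compact simplex together with the $J_1$-to-uniform upgrade afforded by the a.s.\ continuity of the limit. Passing to the limit in this coupled term is the genuine technical heart of the argument; everything else is either a compactness invocation or a direct consequence of Lemma \ref{lem:martingale}.
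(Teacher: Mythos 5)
Your proposal follows essentially the same route as the paper: Aldous/Kurtz tightness for $(\mu^N)$ with trivial compact containment, compactness of $\mathcal{R}^{|S|}$ for the control coordinate, vanishing jump sizes ($1/N$) for the a.s.\ continuity of the limit, and a Skorokhod representation plus a two-term splitting of the drift integral (one term handled by uniform continuity of $\mu\mapsto \mu(i)q(\{j\}|i,a,\mu)$ on the compact simplex via Lemma \ref{lem:unifconf}, the other by the Young topology and (Q5)) to identify the limit ODE. One minor correction: in a window of length $\delta$ the process makes $O(N\delta)$ jumps of size $1/N$, not $O(1)$ jumps, but the product is still $O(\delta)$ uniformly in $N$, so your Aldous estimate goes through exactly as in the paper.
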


\section{The deterministic limit model}\label{sec:F}
Consider the following deterministic optimization problem:
\begin{align*}
    (F)   & \quad\quad \sup_{\hat\pi} \int_0^\infty e^{-\beta t} r(\mu_t,\hat\pi_t) dt,\\
    & \quad\quad s.t.\ \mu_0\in \PP(S),\; \hat\pi_t^i \in \PP(A),\; \hat\pi_t^i(D(i))=1, \\
    & \quad\quad \hspace*{0.7cm} \mu_t(j) = \mu_0(j) + \int_0^t \sum_{i\in S} \mu_s(i) \int q(\{j\}|i,a,\mu_s) \hat\pi_s^{i}(da)ds, \quad \forall t\ge 0, j=1,\ldots,\vert S\vert.
\end{align*}
Note that the theory of continuous-time Markov processes implies that $\mu_t$ is automatically a distribution. Hence one of the $\vert S\vert$ differential equations in $(F)$ may be skipped.  Also note that when the transition intensity and the reward are linear in the action, relaxation of the control is unnecessary.
We denote the maximal value of this problem by $V^F(\mu_0).$ We show next, that this value provides an asymptotic upper bound to the value of problem \eqref{prob:II}.

\begin{theorem}\label{theo:upperbound}
    For all  $(\mu^N_0) \subset \PP_N(S), \mu_0\in \PP(S)$ with $\mu_0^N \Rightarrow \mu_0$ and for all sequences of policies $(\hat\pi_t^N)$ we have
     $$ \limsup_{N\to\infty } V^N_{\hat\pi^N}(\mu_0^N) \le V^F(\mu_0).$$
\end{theorem}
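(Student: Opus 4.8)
The plan is to establish the upper bound via a standard weak-convergence argument combined with the relative compactness already proved in Theorem \ref{theo:subsequence}. First I would pass to a subsequence $(N_k)$ along which $\limsup_{N\to\infty} V^N_{\hat\pi^N}(\mu_0^N)$ is actually attained, i.e.\ along which $V^{N_k}_{\hat\pi^{N_k}}(\mu_0^{N_k})$ converges to the $\limsup$. Applying Theorem \ref{theo:subsequence} to the state-action processes $(\mu^{N_k},\hat\pi^{N_k})$, I can extract a further subsequence (not relabelled) converging weakly, $(\mu^{N_k},\hat\pi^{N_k}) \Rightarrow (\mu,\hat\pi)$, where by parts a) and b) the limit $(\mu,\hat\pi)$ is a continuous path satisfying the deterministic state equation appearing in problem $(F)$. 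The crucial point is that this limit $(\mu,\hat\pi)$ is therefore an \emph{admissible} pair for $(F)$ with initial condition $\mu_0$, so its objective value is bounded above by $V^F(\mu_0)$.

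The main work is then to show the convergence of the reward functionals, namely
\begin{equation}\nonumber
\EE\Big[\int_0^\infty e^{-\beta t} r(\mu_t^{N_k},\hat\pi_t^{N_k})dt\Big] \longrightarrow \int_0^\infty e^{-\beta t} r(\mu_t,\hat\pi_t)dt.
\end{equation}
I would invoke the Skorokhod representation theorem to realize the weak convergence $(\mu^{N_k},\hat\pi^{N_k}) \Rightarrow (\mu,\hat\pi)$ as almost-sure convergence on a common probability space, which reduces matters to a pathwise limit. Writing out $r(\mu,\alpha) = \sum_{i\in S}\int r(i,a,\mu)\alpha^i(da)\mu(i)$, the integrand is a finite sum, and for each fixed state $i$ the map $(t,a)\mapsto e^{-\beta t} r(i,a,\mu_t) \mu_t(i)$ is, after the established almost-sure convergence $\mu^{N_k}\to\mu$ in $D_{\PP(S)}[0,\infty)$, a Carath\'eodory-type function to which the characterization of Young-topology convergence in Lemma \ref{lem:Young}b) applies. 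Here assumptions (R1) and (R2) supply the required continuity of $\mu\mapsto \mu(i)r(i,a,\mu)$ and $a\mapsto r(i,a,\mu)$, while the boundedness \eqref{eq:rbounded} together with the exponential discount factor $e^{-\beta t}$ gives the integrability $\int_0^\infty \sup_a|\psi(t,a)|dt<\infty$ needed to apply the lemma and, via dominated convergence, to interchange the expectation and the limit.

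The main obstacle I anticipate is the interaction between the two different modes of convergence: $\mu^{N_k}$ converges in the Skorokhod topology (so the paths may have jumps that only disappear in the limit), whereas the Young-topology convergence of $\hat\pi^{N_k}$ is tested against functions $\psi(t,a)$ that are continuous in $a$ but merely measurable in $t$. To feed the almost-sure limit $\mu_t^{N_k}\to\mu_t$ into the $t$-slot of the Carath\'eodory function one must control the discontinuity set: since $(\mu_t)$ has continuous paths by Theorem \ref{theo:subsequence}a), Skorokhod convergence to a continuous limit upgrades to locally uniform convergence, which is exactly what lets the composite $t\mapsto e^{-\beta t}r(i,a,\mu_t^{N_k})\mu_t^{N_k}(i)$ converge in the sense required by Lemma \ref{lem:Young}. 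Verifying this joint-continuity step carefully—rather than the subsequent routine dominated-convergence estimate—is where the real care is needed. Finally, since every convergent subsequence of the original $\limsup$ is bounded above by $V^F(\mu_0)$, and the subsequence was chosen to realize the $\limsup$, the claimed inequality follows.
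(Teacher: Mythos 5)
Your proposal is correct and follows essentially the same route as the paper's own proof: extract a weakly convergent subsequence via Theorem \ref{theo:subsequence}, note the limit is admissible for $(F)$, and split the reward convergence into the part driven by $\mu^{N}\to\mu$ (handled by uniform convergence on compacts, (R1)--(R2) and dominated convergence) and the part driven by $\hat\pi^{N}\to\hat\pi$ in the Young topology. Your explicit passage to a subsequence realizing the $\limsup$ and your identification of the Skorokhod/Young interaction as the delicate step are exactly the points the paper addresses via its triangle-inequality decomposition and Lemma \ref{lem:unifconf}.
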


\begin{proof}
According to Theorem \ref{theo:subsequence}  we can choose a subsequence $(N_k)$ of corresponding state and action processes such that           $$(\mu^{N_k},\hat\pi^{N_k}) \Rightarrow (\mu,\hat\pi), \mbox{ for } k\to\infty.$$
  For convenience we still denote this sequence by $(N).$ We show that 
  \begin{align*}
    \lim_{N\to\infty } V^N_{\hat\pi^N}(\mu_0^N) & = \lim_{N\to\infty } \EE\Big[ \int_0^\infty e^{-\beta t} r(\mu_t^N,\hat\pi_t^N)dt\Big]\\&= \EE\Big[ \int_0^\infty e^{-\beta t} r(\mu_t,\hat\pi_t)dt\Big] \le V^F(\mu_0).
  \end{align*}
The last inequality is true due to the  fact that by Theorem \ref{theo:subsequence} the limit process $ (\mu,\hat\pi)$ satisfies the constraints  of problem $(F)$.

Let us show the second equality. We obtain by bounded convergence ($r$ is bounded)
 \begin{align*}
 & \lim_{N\to\infty } \EE\Big[ \int_0^\infty e^{-\beta t} r(\mu_t^N,\hat\pi_t^N)dt\Big]= \EE\Big[   \int_0^\infty e^{-\beta t} \lim_{N\to\infty }r(\mu_t^N,\hat\pi_t^N)dt\Big]. 
  \end{align*}
Further we have
\begin{align*}
    & \left| \int_0^\infty e^{-\beta t} \sum_{i\in S}\int_A r(i,a,\mu_t^N) \hat\pi_t^{N,i}(da) \mu_t^N(i)dt -  \int_0^\infty e^{-\beta t} \sum_{i\in S}\int_A r(i,a,\mu_t) \hat\pi_t^{i}(da) \mu_t(i)dt\right|\\
    \le & \left| \int_0^\infty e^{-\beta t} \sum_{i\in S}\int_A r(i,a,\mu_t^N) \hat\pi_t^{N,i}(da) \mu_t^N(i)dt -  \int_0^\infty e^{-\beta t} \sum_{i\in S}\int_A r(i,a,\mu_t^{}) \hat\pi_t^{N,i}(da) \mu_t(i)dt\right|\\+&  \left|   \int_0^\infty e^{-\beta t} \sum_{i\in S}\int_A r(i,a,\mu_t^{}) \hat\pi_t^{N,i}(da) \mu_t(i)dt-\int_0^\infty e^{-\beta t} \sum_{i\in S}\int_A r(i,a,\mu_t) \hat\pi_t^{i}(da) \mu_t(i)dt\right|.
\end{align*}
The second expression tends to zero for $N\to\infty$ due to the definition of the Young topology and the fact that $a\mapsto r(i,a,\mu)$ is continuous by (R2). The first expression can be bounded from above by
\begin{align*}
   & \int_0^\infty e^{-\beta t} \sum_{i\in S}\int_A \left| r(i,a,\mu_t^N)  \mu_t^N(i) - r(i,a,\mu_t^{})\mu_t(i) \right| \hat\pi_t^{N,i}(da) dt \\
    \le & \int_0^\infty e^{-\beta t} \sum_{i\in S} \sup_{a\in D(i)} \left| r(i,a,\mu_t^N)  \mu_t^N(i) - r(i,a,\mu_t)\mu_t(i) \right|  dt
\end{align*}
which also tends to zero for $N\to\infty$ due to (R1), (R2), Lemma \ref{lem:unifconf} and dominated convergence. Thus, the statement follows.
\end{proof}

On the other hand we are now able to construct a strategy which is asymptotically optimal in the sense that the upper bound in the previous theorem is attained in the limit. Suppose that $(\mu^*,\hat\pi^*)$ is an optimal state-action trajectory for problem $(F)$. Then we can consider for the $N$ agents problem the strategy
$$ \hat\pi_t^{N,i}:= \hat\pi^{*,i}_t $$
which applies at time $t$ the kernel $\hat\pi^{*,i}_t$ irrespective of the state $\mu_t^N$ the process is in. More precisely, the considered strategy is deterministic and not a feedback policy.

\begin{theorem}\label{theo:asymptotic1}
    Suppose $\hat\pi^*$ is an optimal strategy for $(F)$ where the corresponding differential equation in $(F)$ has a unique solution and let  $(\mu^N_0) \subset \PP_N(S)$ be such that $\mu_0^N \Rightarrow \mu_0\in \PP(S)$. Then if we use  strategy $\hat\pi^*$ for problem \eqref{prob:II} for any $N$ we obtain
     $$ \lim_{N\to\infty } V_{\hat\pi^*}^N(\mu_0^N) =V^F(\mu_0).$$
     Thus, we call  $\hat\pi^*$  asymptotically optimal.
\end{theorem}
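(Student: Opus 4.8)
The plan is to prove the statement by showing that the entire sequence of state-action processes generated by the open-loop strategy $\hat\pi^*$ converges to the deterministic optimal trajectory $(\mu^*,\hat\pi^*)$ of $(F)$, and then to pass the discounted reward functional to this limit. This yields the full limit directly; in particular it reproves and sharpens Theorem \ref{theo:upperbound} from $\limsup_{N\to\infty}V^N_{\hat\pi^*}(\mu_0^N)\le V^F(\mu_0)$ to a genuine equality, because the limiting trajectory will be optimal rather than merely feasible for $(F)$.

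First I would observe that under the strategy $\hat\pi^{N,i}_t:=\hat\pi^{*,i}_t$ the action process is the \emph{same} deterministic element of $\mathcal{R}^{\vert S\vert}$ for every $N$, so it converges trivially to $\hat\pi^*$. By Theorem \ref{theo:subsequence} the sequence $(\mu^N,\hat\pi^*)_N$ is relatively compact, hence from any subsequence I can extract a further one $(N_k)$ with $(\mu^{N_k},\hat\pi^*)\Rightarrow(\mu,\hat\pi)$. Since the action component is constant in $N$, necessarily $\hat\pi=\hat\pi^*$, and part (b) of Theorem \ref{theo:subsequence} forces the limiting measure process to satisfy
$$\mu_t(j)=\mu_0(j)+\int_0^t\sum_{i\in S}\mu_s(i)\int q(\{j\}|i,a,\mu_s)\hat\pi^{*,i}_s(da)\,ds$$
for all $j$. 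This is exactly the state equation of $(F)$ with the fixed control $\hat\pi^*$ inserted, and by the uniqueness hypothesis its solution started at $\mu_0$ is unique, namely $\mu^*$. Therefore every subsequential weak limit coincides with the single deterministic point $(\mu^*,\hat\pi^*)$.

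The decisive step is the upgrade from subsequential to full convergence: a relatively compact sequence with a unique possible subsequential limit converges, so $(\mu^N,\hat\pi^*)\Rightarrow(\mu^*,\hat\pi^*)$, and because the limit is deterministic this weak convergence is in fact convergence in probability (via the Skorokhod representation one may even assume almost sure convergence of the paths). I would then reuse verbatim the reward-continuity computation from the proof of Theorem \ref{theo:upperbound} — splitting the difference of the two reward integrals into a first piece handled by (R1), (R2), the uniform convergence Lemma \ref{lem:unifconf} and dominated convergence, and a second piece handled by the Young topology together with (R2) — to obtain
$$\lim_{N\to\infty}V^N_{\hat\pi^*}(\mu_0^N)=\int_0^\infty e^{-\beta t}r(\mu_t^*,\hat\pi_t^*)\,dt=V^F(\mu_0),$$
where the last equality holds because $(\mu^*,\hat\pi^*)$ is optimal for $(F)$.

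The main obstacle I anticipate is precisely the role of the uniqueness assumption on the state equation. Without it, distinct subsequences of $(\mu^N,\hat\pi^*)$ could in principle converge to different solutions of the limiting ODE, and one would only recover a $\liminf$/$\limsup$ sandwich rather than a true limit; uniqueness is what collapses all subsequential limits onto the single trajectory $\mu^*$ and thereby converts relative compactness into convergence of the whole sequence. A secondary technical point worth checking is that the reward passage to the limit in Theorem \ref{theo:upperbound}, originally phrased along a convergent subsequence, applies unchanged once the full sequence is known to converge; this is immediate, since that argument used only weak convergence of the state-action pair together with (R1), (R2) and boundedness of $r$.
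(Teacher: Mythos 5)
Your proposal is correct and follows essentially the same route as the paper's own proof: relative compactness from Theorem \ref{theo:subsequence}, the uniqueness hypothesis forcing every subsequential limit to be $(\mu^*,\hat\pi^*)$ and hence full convergence of the sequence, and then the reward-continuity argument from the proof of Theorem \ref{theo:upperbound} to pass to the limit and identify the value with $V^F(\mu_0)$. The paper states this more tersely, but the logical content, including the role of uniqueness, is identical to what you wrote.
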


\begin{proof}
    First note that  $\hat\pi^*$ is an admissible policy for any $N$. Further let $(\mu_t^N)$ be the corresponding state process when $N$ agents are present. Since the corresponding differential equation in $(F)$ has a unique solution, every subsequence $(N_k)$ is such that 
     $$\mu^{N_k} \Rightarrow \mu^*, \mbox{ for } k\to\infty$$
     holds (Theorem \ref{theo:subsequence}). Using the same arguments as in the last proof we obtain
     \begin{align*}
          & \lim_{N\to\infty } \EE\Big[ \int_0^\infty e^{-\beta t} r(\mu_t^{N},\hat\pi_t^*)dt\Big]= \EE\Big[   \int_0^\infty e^{-\beta t} r(\mu_t^*,\hat\pi_t^*)dt\Big] = V^F(\mu_0). 
     \end{align*}
     Together with the previous theorem, the statement is shown.
\end{proof}


\begin{remark}
\begin{itemize}
    \item[a)] 
    In order to guarantee the unique solvability, it is sufficient to assume Lipschitz continuity for $\mu \mapsto q(\{j\}|i,a,\mu)$. More precisely, instead of (Q4) we have to assume (Q4') which is given below. The proof follows from the Theorem of Picard-Lindelöf. Example \ref{ex:multiple} shows what may happen if the  differential equation for $(\mu_t)$ in $(F)$ has multiple solutions.
    \item[b)] Note that the construction of asymptotically optimal policies which we present here, works in the same way when we consider control problems with finite time horizon. I.e.\ instead of \eqref{prob:II} we consider

\begin{align}
   \label{prob:finitehorizon}
    & \sup_{\hat\pi}  \EE_\mathbf{x}^{\hat\pi}\Big[ \int_0^T e^{-\beta t} r(\mu^{N}_t,\hat\pi_t)dt+g(\mu^{N}_T)\Big]
\end{align}
with possibly a terminal reward $g(\cdot)$ for the final state.
In this case $(F)$ is given with a finite time horizon 
\begin{align}\nonumber
    & \quad\quad \sup_{\hat\pi} \int_0^T e^{-\beta t} r(\mu_t,\hat\pi_t) dt + g(\mu_T)\\ \label{prob:detfinite}
   & \quad\quad s.t.\ \mu_0\in \PP(S),\; \hat\pi_t^i \in \PP(A),\; \hat\pi_t^i(D(i))=1, \\ \nonumber
    & \quad\quad \hspace*{0.7cm} \mu_t(j) = \mu_0(j) + \int_0^t \sum_{i\in S} \mu_s(i) \int q(\{j\}|i,a,\mu_s) \hat\pi_s^{i}(da)ds, \quad \forall t\in[0,T], j=1,\ldots,|S|.
\end{align}
Theorem \ref{theo:asymptotic1} holds accordingly.
\item[c)] General statements about the existence of optimal controls in $(F)$ can only be made under additional assumptions. A classical result is the Theorem of Filipov-Cesari (see \cite{seierstad} Theorem 8 in Chapter II.8 for the finite time horizon problem and Theorem 15 in Chapter III.7 for the infinite horizon problem). It states the existence of an optimal control (for the finite horizon problem) under the following assumptions:
\begin{itemize}
    \item [i)] There exist admissible pairs $(\hat\pi,\mu),$ (for example by assuming Lipschitz continuity like in a))
    \item[ii)] $A$ is closed and bounded (which we assume here)
    \item[iii)] $\mu$ is bounded for all controls (which we have here)
    \item[iv)] For fixed $\mu$ the set $\{(r(\mu,\alpha)+\gamma, f_1(\mu,\alpha)), \gamma \le 0, \alpha \in A\}$  is convex where $f_1$ is the r.h.s. of the differential equation in $(F)$.
\end{itemize}

\item[d)]Suppose we obtain for problem $(F)$ an optimal feedback rule $\hat\pi_t (\cdot)= \hat\pi(\cdot|\mu_t).$ If $\mu\mapsto \hat\pi(\cdot|\mu)$ is continuous, this feedback rule is also asymptotically optimal for problem \eqref{prob:II}. The proof can be done in the same way as before. If the mapping is not continuous, the convergence may not hold (see application 6.3).
\item[e)] Natural extensions of our model that we have not included in the presentation are resource constraints. For example the total sum of fractions of a certain action may be limited, i.e. we restrict the set $\PP(A)^{|S|}$ by requiring that $\sum_{i\in S} \hat\pi_t^i(\{a^0\}|\mu)\le c < |S|$ for a certain action $a^0\in A.$ As long as the constraint yields a compact subset of $\PP(A)^{|S|}$ our analysis also covers this case.
\end{itemize}
\end{remark}

\begin{example}\label{ex:multiple}
    In this example we discuss what may happen if the differential equation for $(\mu_t)$ in $(F)$ has multiple solutions. Suppose the state space is $S=\{1,2\}$ and the system is uncontrolled. State 1 is absorbing, i.e.\ $q(\{1\}|1,\mu)=q(\{2\}|1,\mu)=0$ (since the system is uncontrolled we skip the action from the notation). 
    So agents can only change from state  2 to 1. The intensity of such a  change is 
$$q(\{1\}|2,\mu)= \left\{ \begin{array}{cc}
  \frac{(\mu_t(1))^\frac13}{1-\mu_t(1)},   & \mbox{if } \mu_t(1) \le 0.99  \\
   \frac{0.99^\frac13}{0.01}  &  \mbox{if } \mu_t(1) \ge 0.99. 
\end{array}\right.$$ Intensities are bounded and continuous. Since the two probabilities $\mu_t(1)+\mu_t(2)=1$ we can concentrate on $\mu_t(1).$ The differential equation for $\mu_t(1)$ in $(F)$ is
$$\mu'_t(1)= \mu_t(1)q(\{1\}|1,\mu_t)+(1-\mu_t(1)) q(\{1\}|2,\mu)  = (\mu_t(1))^\frac13$$
as long as $\mu_t(1) \le 0.99.$
If $\mu_0(1)=0$, there are two solutions of this initial value problem: $\mu_t(1)\equiv 0$ and $\mu_t(1)=(\frac23 t)^\frac32$ for $\mu_t(1) \le 0.99.$  Now consider the following sequence $(\mu_0^N):$ For $N$ even we set $\mu_0^N=(0,1)$ (all $N$ agents start in state 2), for $N$ odd we set $\mu_0^N=(1/N,N-1/N)$ (exactly one agent starts in state 1). Obviously $(\mu_0^N)\Rightarrow (0,1).$ However, when we consider the even subsequence we obtain $\mu_t^N(1)\equiv0$ since the intensity to change from 2 to 1 remains 0. The uneven subsequence converges against the second solution $\mu_t(1)=(\frac23 t)^\frac32$ as long as $\mu_t(1)$ is below 0.99. Thus, when we skip the assumption of a unique solution in Theorem \ref{theo:asymptotic1} we only obtain
 $\limsup_{N\to\infty } V_{\hat\pi^*}^N(\mu_0^N) \le V^F(\mu_0)$, see Theorem \ref{theo:upperbound}.
 \begin{figure}[H]
	\centering
	\includegraphics[height=8.8cm]{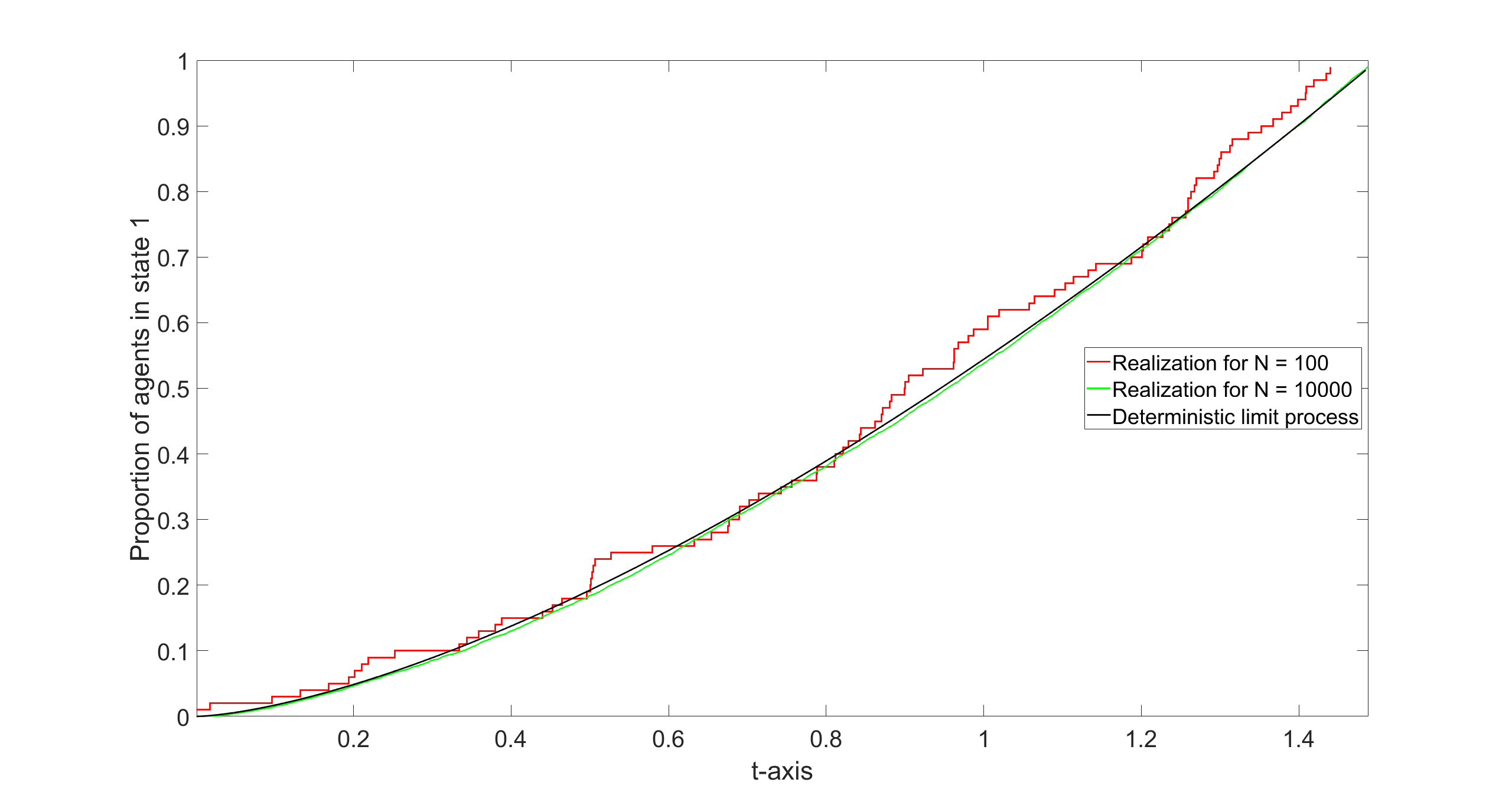}
	\caption{Colourful lines: State trajectories $\mu_t^N(1)$ for $N = 100$ (red) and $N=10000$ (green) agents in Example \ref{ex:multiple} when one agent starts in state 1.\\ Black line: Deterministic limit process $\mu_t(1) = (\frac{2}{3}t)^\frac{3}{2}$.} \label{fig:1}
\end{figure}

\end{example}

Under stricter assumptions it is possible to prove that the rate of convergence in the finite horizon problem \eqref{prob:finitehorizon} is $1/\sqrt{N}$. In order to obtain this rate we need Lipschitz conditions on the reward function and the intensity functions. More precisely assume
\begin{itemize}
    \item[(R1')]  For all $(i,a)\in D$  there exists a uniform constant $L_1>0$  s.t. $$|r(i,a,\mu) -r(i,a,\nu)| \le L_1 \|\mu-\nu\|_{TV}, \quad |g(\mu)-g(\nu)| \le L_1 \|\mu-\nu\|_{TV} $$ for all $\mu,\nu \in \PP(S).$
    \item[(Q4')]  For all $(i,a)\in D, j\in S$  there exists a uniform constant $L_2>0$  s.t. $$|q(\{j\} |i,a,\mu) -q(\{j\}| i,a,\nu)| \le L_2 \|\mu-\nu\|_{TV} $$ for all $\mu,\nu \in \PP(S).$
\end{itemize}
Denote by $\hat\pi^*$ the optimal control of the limiting problem \eqref{prob:detfinite}, $V^{F,T}(\mu_0)$ the corresponding value and let
$$ V_{\hat\pi^*}^{N,T}(\mu_0^N):=  \EE_\mathbf{x}^{\hat\pi^*}\Big[ \int_0^T e^{-\beta t} r(\mu^{N}_t,\hat\pi_t^*)dt+g(\mu^{N}_T)\Big].$$
Then we can state the following convergence rate
\begin{theorem}\label{theo:convrate}
    In the finite horizon setting under assumption (Q1)-(Q5) with (Q4) replaced by (Q4') and (R1'), (R2), suppose that $\E\left[\|\mu^{N}_0-\mu_0\|_{TV}\right] \le \frac{L_0}{\sqrt{N} }$ for a constant $L_0>0$. Then
    $$ \Big| V_{\hat\pi^*}^{N,T}(\mu_0^N) - V^{F,T}(\mu_0)\Big|\le \frac{\tilde L}{\sqrt{N} } $$
    for a constant $\tilde L>0$ which is independent of $N$, but depends on $T.$
\end{theorem}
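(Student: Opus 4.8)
The plan is to reduce the whole statement to a uniform-in-time second-moment estimate on the state discrepancy $\EE[\|\mu^N_t-\mu^*_t\|_{TV}]$, where $\mu^*$ is the (unique, by (Q4') and Picard--Lindel\"of) deterministic trajectory generated by the optimal control $\hat\pi^*$ of \eqref{prob:detfinite}. The crucial structural point is that $\hat\pi^*$ is applied \emph{open loop}: in the $N$ agents problem we set $\hat\pi^{N,i}_s=\hat\pi^{*,i}_s$ irrespective of the state, so the same kernels enter the martingale representation \eqref{eq:Martingal} and the deterministic constraint of \eqref{prob:detfinite}. Subtracting the two identities for a fixed component $j$ gives, with $\Delta^N_t(j):=\mu^N_t(j)-\mu^*_t(j)$,
\begin{equation*}
\Delta^N_t(j)=M^N_t(j)+\big(\mu^N_0(j)-\mu_0(j)\big)+\int_0^t\sum_{i\in S}\Big[\mu^N_s(i)\!\int q(\{j\}|i,a,\mu^N_s)\hat\pi^{*,i}_s(da)-\mu^*_s(i)\!\int q(\{j\}|i,a,\mu^*_s)\hat\pi^{*,i}_s(da)\Big]ds,
\end{equation*}
so the only remaining discrepancy inside the integral sits in the state argument of $q$ and in the weights $\mu_s(i)$.

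First I would estimate the integrand. Splitting the bracket as $\mu^N_s(i)\int[q(\{j\}|i,a,\mu^N_s)-q(\{j\}|i,a,\mu^*_s)]\hat\pi^{*,i}_s(da)+(\mu^N_s(i)-\mu^*_s(i))\int q(\{j\}|i,a,\mu^*_s)\hat\pi^{*,i}_s(da)$ and applying the Lipschitz bound (Q4') to the first piece and the uniform bound $q_{max}$ from (Q3) to the second, one obtains, after summing over $i$ and $j$, a constant $C=C(|S|,q_{max},L_2)$ with $\sum_j|\cdots|\le C\|\mu^N_s-\mu^*_s\|_{TV}$. Taking expectations (and absorbing the fixed constant relating $\sum_j|\Delta^N_t(j)|$ to the total variation norm) yields the integral inequality
\begin{equation*}
\EE\big[\|\mu^N_t-\mu^*_t\|_{TV}\big]\le\sum_{j\in S}\EE\big[|M^N_t(j)|\big]+\EE\big[\|\mu^N_0-\mu_0\|_{TV}\big]+C\int_0^t\EE\big[\|\mu^N_s-\mu^*_s\|_{TV}\big]ds.
\end{equation*}

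The heart of the argument --- and the step I expect to be the main obstacle --- is the martingale estimate. By \eqref{eq:qtermsystem} the process $t\mapsto\mu^N_t(j)$ jumps by $\pm 1/N$ whenever an agent enters or leaves state $j$, and the total intensity of such jumps is of order $N$, since there are $N\mu^N_s(i)$ agents in each state $i$, each jumping at rate at most $q_{max}$ by (Q3). Hence the predictable quadratic variation satisfies
\begin{equation*}
\langle M^N(j)\rangle_t=\int_0^t\Big(\frac1N\Big)^2\lambda^N_s(j)\,ds\le\frac{|S|\,q_{max}}{N}\,t,
\end{equation*}
where $\lambda^N_s(j)$ is the total jump rate affecting component $j$. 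Since $M^N(j)$ is a martingale, $\EE[(M^N_t(j))^2]=\EE[\langle M^N(j)\rangle_t]\le|S|q_{max}T/N$ for $t\le T$, and Jensen's inequality gives $\EE[|M^N_t(j)|]\le(|S|q_{max}T)^{1/2}N^{-1/2}$ (a sup-version would follow from Doob's $L^2$ inequality if needed). This is exactly where the rate $1/\sqrt N$ is produced; it is the second-moment refinement of the weak convergence $M^N(j)\Rightarrow 0$ from Lemma \ref{lem:martingale}, and the delicate part is the correct order-$N$ bookkeeping of the jump intensity against the $(1/N)^2$ jump sizes.

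Inserting these bounds together with the hypothesis $\EE[\|\mu^N_0-\mu_0\|_{TV}]\le L_0/\sqrt N$ into the integral inequality and applying Gronwall's lemma gives $\sup_{t\le T}\EE[\|\mu^N_t-\mu^*_t\|_{TV}]\le C_2/\sqrt N$ with $C_2=\big(L_0+|S|(|S|q_{max}T)^{1/2}\big)e^{CT}$; note that the Gronwall factor $e^{CT}$ is precisely what forces the $T$-dependence of the final constant. Finally I would transfer this to the value. Writing $r(\mu,\hat\pi)=\sum_i\int r(i,a,\mu)\hat\pi^i(da)\mu(i)$ and splitting $r(i,a,\mu^N_t)\mu^N_t(i)-r(i,a,\mu^*_t)\mu^*_t(i)$ into one factor controlled by the Lipschitz bound (R1') and one by the boundedness of $r$ (which follows from (R1') and the continuity (R2) on the compact set $D(i)$), one gets $|r(\mu^N_t,\hat\pi^*_t)-r(\mu^*_t,\hat\pi^*_t)|\le C_3\|\mu^N_t-\mu^*_t\|_{TV}$, and $|g(\mu^N_T)-g(\mu^*_T)|\le L_1\|\mu^N_T-\mu^*_T\|_{TV}$ directly from (R1'). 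Since $V^{F,T}(\mu_0)=\int_0^T e^{-\beta t}r(\mu^*_t,\hat\pi^*_t)dt+g(\mu^*_T)$ is deterministic, integrating these pointwise reward bounds against $e^{-\beta t}$ and taking expectations yields
\begin{equation*}
\big|V^{N,T}_{\hat\pi^*}(\mu^N_0)-V^{F,T}(\mu_0)\big|\le\Big(C_3\int_0^T e^{-\beta t}dt+L_1\Big)\frac{C_2}{\sqrt N}=:\frac{\tilde L}{\sqrt N},
\end{equation*}
with $\tilde L$ independent of $N$ but depending on $T$ through $C_2$ and the time integral, as claimed.
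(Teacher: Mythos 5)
Your proposal is correct and follows essentially the same route as the paper's proof: subtract the martingale representation \eqref{eq:Martingal} from the deterministic integral equation, split the integrand via (Q4') and $q_{max}$, bound $\EE[|M_t^N(j)|]$ through the quadratic variation (jump size $1/N$ times order-$N$ intensity, giving $O(N^{-1/2})$ after Cauchy--Schwarz/Jensen), apply Gronwall, and transfer to the value via the Lipschitz bounds from (R1'). The only differences are immaterial constants (e.g.\ your extra $|S|$ factor in the quadratic variation bound).
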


The statement about the convergence rate can be extended to the infinite horizon problem when the discount factor is large enough. Also note that $\E \left[\|\mu^{N}_0-\mu_0\|_{TV}\right] \le \frac{L_0}{\sqrt{N} }$ is satisfied if e.g.\ the states of the $N$ agents are sampled  i.i.d.\ from $\mu_0.$
\\

A direct implementation of policy $\hat{\pi}^\ast$ in the problem \eqref{prob:II} might make it necessary to update the policy continuously. This can be avoided by using the following policy instead. We assume here that $t\mapsto \hat\pi^*_t$ is piecewise continuous.  Thus, let $(t_n)_{n\in\N}$ be the discontinuity points in time of $\hat{\pi}^\ast$ and define the set $$ \{ T_n^N, n\in\N\} \cup \{  t_n, n\in \N\} =: \{\tilde T_1^N< \tilde T_2^N <\ldots\}$$ where  $T_n^N$ describes the time of the $n$-th jump of the $N$ agents process. Then $(\tilde T_n^N)$ is the ordered sequence of the time points in this set. Define
\begin{equation}\label{eq:policy2}
    \pi_t^{N,\ast} := \sum_{n=0}^\infty \hat\pi_{\tilde T_n}^\ast\mathds{1}_{[\tilde T_n^N, \tilde T_{n+1}^N)}(t).
\end{equation}
The idea of the action process $\pi_t^{N,\ast}$ is to adapt it to $\hat{\pi}^\ast$ only when an agent changes its state or when $\hat{\pi}^\ast$ has a jump, and to keep it constant otherwise.
It can be shown that this sequence of policies is also asymptotically optimal.

\begin{theorem}
    Suppose $\hat\pi^*$ is a   piecewise continuous optimal strategy for $(F)$ where the corresponding differential equation in $(F)$ has a unique solution and let  $(\mu^N_0) \subset \PP_N(S)$ be such that $\mu_0^N \Rightarrow \mu_0\in \PP(S)$. Then if we use the strategy $( \pi_t^{N,\ast})$ of \eqref{eq:policy2} for problem \eqref{prob:II} for any $N$ we obtain
     $$\lim_{N\to\infty } V_{\hat\pi^{N,\ast}}^N(\mu_0^N) =V^F(\mu_0).$$
\end{theorem}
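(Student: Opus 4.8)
The plan is to establish the joint weak convergence $(\mu^N,\pi^{N,\ast})\Rightarrow(\mu^\ast,\hat\pi^\ast)$ of the state--action process generated by the policy \eqref{eq:policy2} towards the optimal trajectory of $(F)$, and then to read off the value convergence exactly as in the proofs of Theorem \ref{theo:upperbound} and Theorem \ref{theo:asymptotic1}. Observe first that Theorem \ref{theo:upperbound} already supplies the upper bound $\limsup_{N}V^N_{\pi^{N,\ast}}(\mu_0^N)\le V^F(\mu_0)$ for free, since it applies to \emph{any} sequence of policies; the real content is therefore the matching lower bound, which the weak convergence will deliver.

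The skeleton is short. By Theorem \ref{theo:subsequence} the family $(\mu^N,\pi^{N,\ast})_N$ is relatively compact, and every subsequential limit $(\mu,\hat\pi)$ has continuous paths and solves the integral equation of $(F)$ with control $\hat\pi$. Hence, once one knows that every subsequential limit of the action part equals $\hat\pi^\ast$, the uniqueness hypothesis on the differential equation forces $\mu=\mu^\ast$, so that the whole sequence converges, $(\mu^N,\pi^{N,\ast})\Rightarrow(\mu^\ast,\hat\pi^\ast)$. The value convergence then follows verbatim from the bounded/dominated convergence estimates in the proof of Theorem \ref{theo:upperbound} (using (R1), (R2) and Lemma \ref{lem:unifconf}) together with $\int_0^\infty e^{-\beta t}r(\mu_t^\ast,\hat\pi_t^\ast)\,dt=V^F(\mu_0)$. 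Everything thus reduces to the single claim that $\pi^{N,\ast}\Rightarrow\hat\pi^\ast$ in the Young topology.

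To prove this claim I would use Lemma \ref{lem:Young} b): for a Carath\'eodory test function $\psi$ with $\int_0^\infty\sup_a|\psi(t,a)|\,dt<\infty$ and each $i\in S$,
$$\int_0^\infty\!\!\int_A\psi(t,a)\,\pi_t^{N,\ast,i}(da)\,dt-\int_0^\infty\!\!\int_A\psi(t,a)\,\hat\pi_t^{\ast,i}(da)\,dt=\sum_n\int_{\tilde T_n^N}^{\tilde T_{n+1}^N}\!\!\int_A\psi(t,a)\,\big[\hat\pi_{\tilde T_n}^{\ast,i}-\hat\pi_t^{\ast,i}\big](da)\,dt.$$
On each grid interval the state does not jump (all jump times are grid points) and $\hat\pi^\ast$ has no discontinuity inside (its discontinuities are grid points too), so the bracket records only the continuous variation of $\hat\pi^\ast$ over an interval of length $\tilde T_{n+1}^N-\tilde T_n^N$. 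The decisive mechanism is that this mesh becomes small: wherever the state moves, the total jump intensity \eqref{eq:qtermsystem} of the $N$-agent system is of order $N$, so on any compact time interval the jump times $T_n^N$ become dense and the mesh tends to $0$ in probability (of order $\log N/N$ where the intensity is of order $N$). Together with the piecewise continuity of $t\mapsto\hat\pi_t^\ast$ and dominated convergence with dominating function $2\sup_a|\psi(t,a)|$, the displayed difference then vanishes.

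The hard part is controlling the mesh on time intervals where the $N$-agent process jumps only rarely, i.e.\ near-absorbing regions in which the jump intensity does \emph{not} scale with $N$, the grid may stay coarse, and the frozen policy $\pi^{N,\ast}$ can lag behind $\hat\pi^\ast$. I would dispose of this through the dichotomy encoded in the bound $|b_s^N(j)|\le 2\Lambda_s^N/N$, where $b_s^N(j)=\sum_{i}\mu_s^N(i)\int q(\{j\}|i,a,\mu_s^N)\,\pi_s^{N,\ast,i}(da)$ is the drift of the $j$-th coordinate and $\Lambda_s^N$ the total jump intensity: low activity forces a negligible drift, so the limiting ODE---and hence the identification $\mu=\mu^\ast$---is insensitive to policy staleness there, while in active regions the mesh vanishes as above. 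For the reward one additionally uses that where the state is (asymptotically) frozen the remaining optimisation is time-homogeneous, so an optimal $\hat\pi^\ast$ is essentially stationary there and freezing it costs nothing, with the tail handled by the discount factor $e^{-\beta t}$. Turning this dichotomy into rigorous estimates is the only non-routine step.
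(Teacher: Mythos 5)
Your proposal follows essentially the same route as the paper: reduce everything to the single claim $\pi^{N,\ast}\Rightarrow\hat\pi^{\ast}$, so that the argument of Theorem \ref{theo:asymptotic1} (relative compactness from Theorem \ref{theo:subsequence}, identification of the limit via the unique solution of the ODE, then the dominated-convergence estimates of Theorem \ref{theo:upperbound}) delivers the value convergence; and obtain that claim from the fact that the mesh of the grid $(\tilde T_n^N)$ shrinks, so that on each grid interval the frozen action $\hat\pi^{\ast}_{\tilde T_n^N}$ stays close to $\hat\pi^{\ast}_t$ by piecewise uniform continuity. The only cosmetic difference is that you verify the action convergence through the test-function characterization of the Young topology (Lemma \ref{lem:Young} b)), while the paper bounds $\|\pi_t^{N,\ast}-\hat\pi_t^{\ast}\|_{TV}$ directly by the oscillation of $\hat\pi^{\ast}$ over $[\tilde T_n^N,\tilde T_{n+1}^N]$; both hinge on the same mesh estimate. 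The one substantive divergence is that you isolate, as the ``hard part,'' the control of the mesh where the total jump intensity $-q(\{\mu\}|\mu,\alpha)=N\Lambda(\mu,\alpha)$ has $\Lambda$ near zero (near-absorbing configurations), and you sketch a drift/activity dichotomy without completing it. Be aware that the paper's own proof does not resolve this either: it simply asserts that all $|\tilde T_{n+1}^N-\tilde T_n^N|$ tend to zero uniformly ``(the jump intensity increases with $N$),'' which is justified only when $\Lambda$ is bounded away from zero along the trajectory. So your admitted open step is not a defect relative to the published argument --- it is the same step, made explicit; to fully close it one would indeed need something like your observation that low activity forces both a negligible drift and (for the reward) an essentially stationary optimal control on the affected stretch.
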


\begin{proof}
    In light of the proof of Theorem \ref{theo:asymptotic1} it is enough to show that $\pi^{N,\ast} \Rightarrow \pi^*.$ Indeed, the convergence can be shown $\PP$-a.s. Now $(\pi^{N,\ast})$ converges in $J_1$-topology against $\pi^*$ on $[0,\infty)$ if and only if $(\pi^{N,\ast})|_{[0,T]}$ the restriction to $[0,T]$ converges in the finite $J_1$-topology to the restriction $\pi^*_{[0,T]}$ for all $T$ which are continuity points of the limit function (see \cite{billingsley2013convergence} sec.16, Lem.1). Since $\hat\pi^*$ is piecewise continuous we can consider the convergence on each compact interval of the partition separately. Indeed we have if $t\in [\tilde T_n^N, \tilde T_{n+1}^N] $
    $$ \vert\vert \pi_t^{N,\ast} - \hat\pi_t^{\ast}\vert\vert_{TV} \le \sup_{s\in [\tilde T_n^N, \tilde T_{n+1}^N]}\vert\vert \hat \pi_s^{\ast} - \hat\pi_t^{\ast}\vert\vert_{TV}. $$
    Since $t\mapsto \hat\pi^*_t$ is continuous on this interval and since all $|\tilde T_{n+1}^N-\tilde T_{n}^N|$ converge to zero for $N\to\infty$ uniformly (the jump intensity increases with $N$) we have that the right hand side converges to zero for $N\to\infty$ uniformly in $t$ which implies the statement.
\end{proof}
\begin{remark}
Let us briefly discuss the main differences to \cite{cecchin2021finite} where a similar model is considered. In   \cite{cecchin2021finite} the author considers a finite horizon problem where model data is not necessarily stationary, i.e. reward and transition intensities may depend on time. Moreover, he solves the corresponding optimization problems ($N$-agents and limit problem) via HJB equations. This requires the notion of viscosity solutions and more regularity assumptions in terms of Lipschitz continuity of reward and transition intensities. Using the MDP perspective,  we can state our solution theorem for the $N$ agents problem (in form of a Bellman equation) and the convergence result under weaker continuity conditions. For the convergence to hold we use randomized policies whereas in \cite{cecchin2021finite} the author sticks to deterministic policies throughout. The obtained convergence rates under Lipschitz assumptions are the same whereas our proof is simpler and more direct. In \cite{cecchin2021finite} the problem is further discussed under stronger assumption. In contrast we present some applications next in order to show how to use the results of the previous sections. 
\end{remark}

\section{Applications}\label{sec:appl}
In this section we discuss two applications of the previously derived theorems and one example which shows that state processes under feedback policies do not necessarily have to converge. More precisely we construct in two applications asymptotically optimal strategies for stochastic $N$ agents systems from the deterministic limit problem $(F)$. The advantage of our problem $(F)$ in contrast to the master equation is that it can be solved with the help of {\em Pontryagin's maximum principle} which gives necessary conditions for an optimal control and is in many cases easier to apply than dynamic programming. For examples see  \cite{avram1995fluid, weiss1996optimal,bauerle2000optimal,bauerle2002optimal} and for the theory see e.g.\ \cite{seierstad,zabczyk2020mathematical}.
\subsection{Machine replacement}
The following application is a simplified version of the deterministic control problem in \cite{thompson1968optimal}. A mean-field application can be found in \cite{huang2016mean}. Suppose a company has $N$ statistically equal machines. Each machine can either be in state 0='working' or in state 1='broken', thus $S=\{0,1\}.$ Two actions are available: 0='do nothing' or 1='repair', thus $A=\{0,1\}$. A working machine does not need repair, so $D(0)=\{0\}.$ The transition rates are as follows: A working machine breaks down with fixed rate $\lambda_{wb}>0$. A broken machine which gets repaired changes to the state 'working' with rate $\lambda_{bw}>0$. Thus, we can  summarize the transition rates of one machine by
\begin{eqnarray*}
    q(\{1\} | 0, 0, \mu_t^N) = \lambda_{wb}, & q(\{0\} | 1, a_t, \mu_t^N) = \lambda_{bw} \delta_{\{a_t=1\}}.
\end{eqnarray*}
The diagonal elements of the intensity matrix are given by
\begin{eqnarray*}
    q(\{0\} | 0, 0, \mu_t^N) = -\lambda_{wb}, & q(\{1\} | 1, a_t, \mu_t^N) = -\lambda_{bw} \delta_{\{a_t=1\}},
\end{eqnarray*}
and all other intensities are zero. Obviously (Q1)-(Q5) are satisfied. The initial state of the system is $\mu_0^N=(1,0)$, i.e. all machines are working in the beginning. Each working machine produces a reward rate $g>0$ whereas we have to pay a fixed cost of $C>0$ when we have to call the service for repair, i.e. $$r(i,a,\mu_t^N)= g \delta_{\{i=0\}}-C \delta_{\{a=1\}}\delta_{\{i=1\}} \frac{1}{1-\mu_t^N(0)}.$$
Hence we obtain an interaction of the agents in the reward. Note that (R1), (R2) are satisfied.
This yields the reward rate for the system
$$ r(\mu_t^N,\hat\pi_t) = g \mu_t^N(0)-C(1-\hat\pi^1_t(\{0\}|\mu_t^N)).$$
Thus, problem $(F)$ in this setting is given by (we denote the limit by $(\mu_t(0),\mu_t(1))=:(\mu_t^0,1-\mu_t^0)$ and let $\alpha_t^0:= \hat\pi_t^1(\{0\}|\mu_t)$):
\begin{align*}
 (F)  & \quad\quad \sup_{(\alpha_t)} \int_0^T  g\cdot \mu_t^0-C\cdot(1-\alpha_t^0)dt,\\
   & \quad\quad s.t.\  \mbox{ for all } t\in[0,T]\\
    & \quad\quad \hspace*{0.7cm} 
   \mu_t^0 = 1 + \int_0^t \lambda_{bw}(1-\mu^0_s)(1-\alpha_s^0) -\lambda_{wb}\mu_s^0ds.
\end{align*}
We briefly explain how to solve this problem using Pontryagin's maximum principle. The Hamiltonian function to $(F)$ is given by
\begin{eqnarray*}
    H(\mu_t^0,\alpha_t^0,p_t,t)&=& g\mu_t^0 -C(1-\alpha_t^0)+p_t (\lambda_{bw}(1-\mu_t^0)(1-\alpha_t^0) -\lambda_{wb}\mu_t^0)\\
    &=& (1-\alpha_t^0)(\lambda_{bw} p_t(1-\mu_t^0)-C) +g\mu_t^0-\lambda_{wb} p_t\mu_t^0
\end{eqnarray*}  
where $(p_t)$ is the adjoint function. Pontryagin's maximum principle yields the following sufficient conditions for optimality (\cite{seierstad,zabczyk2020mathematical}):

\begin{lemma}
    The control $(\alpha_t^{0,*})$ with the associated trajectory $(\mu_t^{0,*})$
is optimal for $(F)$ if there exists a continuous and piecewise continuously differentiable function $(p_t)$ such that for all $t>0$:
\begin{itemize}
    \item[(i)] $\alpha_t^{0,*}$ maximizes $\alpha \mapsto  H(\mu_t^0,\alpha,p_t,t)$ for $\alpha \in[0,1],$
    \item[(ii)] $\dot p_t = -g+p_t(\lambda_{wb}+\lambda_{bw}(1-\alpha_t^0))$ at those points where $p_t$ is differentiable,
    \item[(iii)] $p(T)=0.$
\end{itemize}
\end{lemma}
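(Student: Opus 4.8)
The plan is to prove this as a verification (sufficiency) statement for Pontryagin's maximum principle, in the spirit of the Mangasarian--Arrow sufficiency theorems (cf.\ \cite{seierstad}). To lighten notation I write $\mu_t,\alpha_t$ for $\mu_t^0,\alpha_t^0$, put $f_0(\mu,\alpha)=g\mu-C(1-\alpha)$ and $f_1(\mu,\alpha)=\lambda_{bw}(1-\mu)(1-\alpha)-\lambda_{wb}\mu$, so that $H=f_0+p\,f_1$ and the objective reads $J(\alpha)=\int_0^T f_0(\mu_t,\alpha_t)\,dt$. I would compare the candidate $(\mu^*,\alpha^*)$ carrying the adjoint $(p_t)$ against an arbitrary admissible control $\alpha$ with trajectory $\mu$ sharing the initial value $\mu_0=1$, and aim to show $J(\alpha^*)-J(\alpha)\ge 0$.

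First I rewrite the running reward through the Hamiltonian, $f_0=H-p\,f_1$, and use $f_1(\mu_t,\alpha_t)=\dot\mu_t$ along any admissible trajectory. Writing $H_t^*=H(\mu_t^*,\alpha_t^*,p_t,t)$ and $H_t=H(\mu_t,\alpha_t,p_t,t)$, this yields
\begin{equation*}
J(\alpha^*)-J(\alpha)=\int_0^T (H_t^*-H_t)\,dt-\int_0^T p_t(\dot\mu_t^*-\dot\mu_t)\,dt .
\end{equation*}
Integrating the second integral by parts, the boundary terms vanish because $\mu_0^*=\mu_0$ and $p_T=0$ by (iii), leaving
\begin{equation*}
J(\alpha^*)-J(\alpha)=\int_0^T \Big(H_t^*-H_t+\dot p_t(\mu_t^*-\mu_t)\Big)\,dt .
\end{equation*}
Inserting the adjoint equation (ii) in the form $\dot p_t=-H_\mu(\mu_t^*,\alpha_t^*,p_t,t)$ turns the integrand into the supporting-hyperplane expression $H_t^*-H_t-H_\mu(\mu_t^*,\alpha_t^*,p_t,t)(\mu_t^*-\mu_t)$.

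The structural feature I would then exploit is that, for fixed action and fixed $p$, the Hamiltonian $H=(1-\alpha)\big(\lambda_{bw}p(1-\mu)-C\big)+g\mu-\lambda_{wb}p\mu$ is affine in $\mu$; the only nonlinearity is the bilinear $\mu\alpha$-coupling. Consequently $H(\mu_t^*,\alpha_t^*,p_t,t)-H(\mu_t,\alpha_t^*,p_t,t)=H_\mu(\mu_t^*,\alpha_t^*,p_t,t)(\mu_t^*-\mu_t)$ holds with exact equality (the $\mu$-derivative being constant in $\mu$), so the supporting-hyperplane term telescopes and the integrand collapses to the pointwise gap $H(\mu_t,\alpha_t^*,p_t,t)-H(\mu_t,\alpha_t,p_t,t)$. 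It therefore suffices to prove, for a.e.\ $t$,
\begin{equation*}
H(\mu_t,\alpha_t^*,p_t,t)\ \ge\ H(\mu_t,\alpha_t,p_t,t).
\end{equation*}

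This pointwise inequality is where the real work lies and is the main obstacle. Condition (i) only asserts that $\alpha_t^*$ maximizes $\alpha\mapsto H(\mu_t^*,\alpha,p_t,t)$ at the candidate state $\mu_t^*$, whereas here the Hamiltonian is evaluated at the competitor state $\mu_t$. Since $H$ is affine in $\alpha$ with coefficient $K(\mu)=\lambda_{bw}p(1-\mu)-C$, the gap equals $K(\mu_t)(\alpha_t-\alpha_t^*)$, and the required nonnegativity is exactly the concavity of the maximized Hamiltonian $\mu\mapsto\max_\alpha H(\mu,\alpha,p_t,t)$ demanded by Arrow's sufficiency theorem. Because the coupling is bilinear this concavity is delicate, so rather than invoke it blindly I would control the sign of $K(\mu_t)(\alpha_t-\alpha_t^*)$ directly, using the bang-bang switching structure of $\alpha_t^*$ (governed by $\mathrm{sign}\,K(\mu_t^*)$) together with the monotonicity of the trajectory to keep the competitor $\mu_t$ on the correct side of the switching surface. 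Closing this sign argument is the crux; once it holds, integrating the pointwise inequality gives $J(\alpha^*)\ge J(\alpha)$, and hence the optimality of $(\alpha_t^{0,*})$ with associated trajectory $(\mu_t^{0,*})$.
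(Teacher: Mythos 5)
Your reduction is carried out correctly: writing $f_0=H-p f_1$, substituting $f_1(\mu_t,\alpha_t)=\dot\mu_t$, integrating by parts with the boundary terms killed by $\mu_0^*=\mu_0$ and $p_T=0$, and then using that $H$ is affine in $\mu$ for fixed $\alpha$ so that the supporting-hyperplane term is exact, you correctly arrive at
$J(\alpha^*)-J(\alpha)=\int_0^T\bigl(H(\mu_t,\alpha_t^*,p_t,t)-H(\mu_t,\alpha_t,p_t,t)\bigr)dt=\int_0^T K(\mu_t)(\alpha_t-\alpha_t^*)\,dt$ with $K(\mu)=\lambda_{bw}p_t(1-\mu)-C$. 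This is the standard Mangasarian/Arrow route, and it is also essentially what the paper relies on --- the paper gives no argument at all beyond citing \cite{seierstad,zabczyk2020mathematical}, so you have gone further than the paper in making the structure explicit.

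However, the proposal does not constitute a proof, and you say so yourself: the pointwise inequality $K(\mu_t)(\alpha_t-\alpha_t^*)\ge 0$ is declared ``the crux'' and left open. This gap is genuine and cannot be closed by the route you sketch. Condition (i) fixes the sign of $\alpha_t-\alpha_t^*$ via $\mathrm{sign}\,K(\mu_t^*)$, but the factor you need to control is $K(\mu_t)$ at the \emph{competitor} state; since $p_t>0$ for $t<T$ (integrate (ii) backwards from $p_T=0$), $K$ is strictly decreasing in $\mu$ and changes sign on $[0,1]$ whenever $\lambda_{bw}p_t>C$, which does occur for the parameters used in the paper. So a competitor trajectory can sit on the opposite side of the switching surface from $\mu_t^*$, making the integrand negative on a set of positive measure; there is no a priori monotonicity comparison between $\mu_t$ and $\mu_t^*$ that rules this out. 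Equivalently, the maximized Hamiltonian $\hat H(\mu)=\max\{K(\mu),0\}+(g-\lambda_{wb}p_t)\mu$ is a maximum of two affine functions of $\mu$, hence convex rather than concave, so Arrow's sufficiency theorem does not apply globally either. To actually close the lemma one must exploit the specific solution structure (e.g.\ that on the singular/switching arc $K(\mu_t^*)=0$ and elsewhere the comparison can be localized), or retreat to the necessity form of the maximum principle; as written, your argument establishes the reduction but not the optimality claim.
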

Inspecting the Hamiltonian it is immediately clear from (i) that the optimal control is essentially 'bang-bang'. For a numerical illustration we solved $(F)$ for the parameters $C=1, g=2, \lambda_{wb}=1, \lambda_{bw}=2$ and $T=4.$ Here it is optimal to do nothing until time point $t^*=\ln{2}.$ Then it is optimal to repair the fraction $\alpha^{0,*}=1/2$ of the broken machines which keeps the number of working machines at $1/2.$ Finally, $\ln{2}$ time units before the end, we do again nothing and wait until the end of the time horizon. A numerical illustration of the optimal trajectory $\mu_t^{0,*}$ of the deterministic problem together with simulated paths under this policy for different number of $N$ can be found in Figure \ref{fig:1}, left. A number of different simulations for $N=1000$ are shown in Figure \ref{fig:1}, right. The simulated paths are quite close to the deterministic trajectory. \\
\begin{figure}[H]
	\centering
	\includegraphics[height=4.2cm]{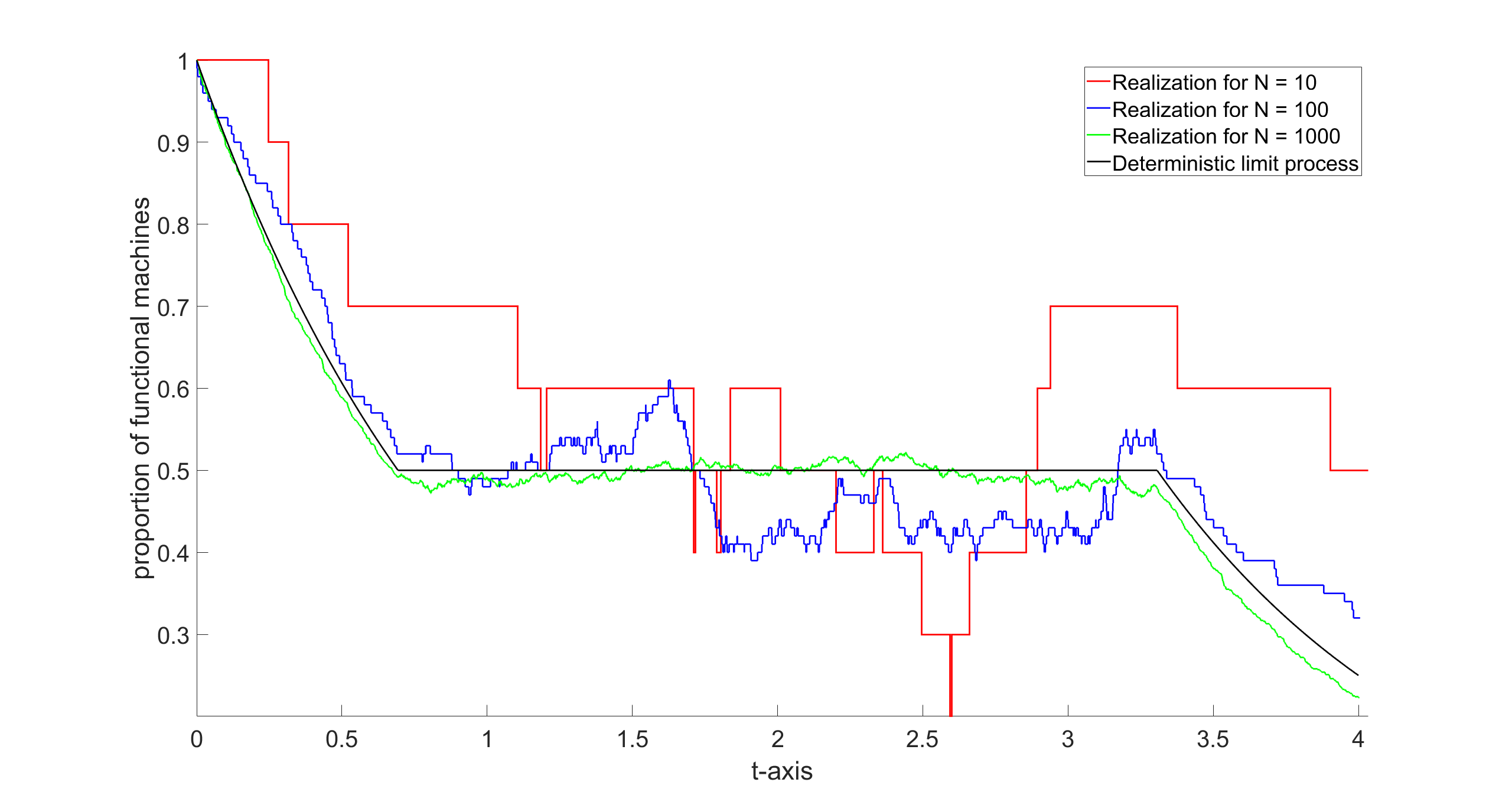}	\includegraphics[height=4.2cm]{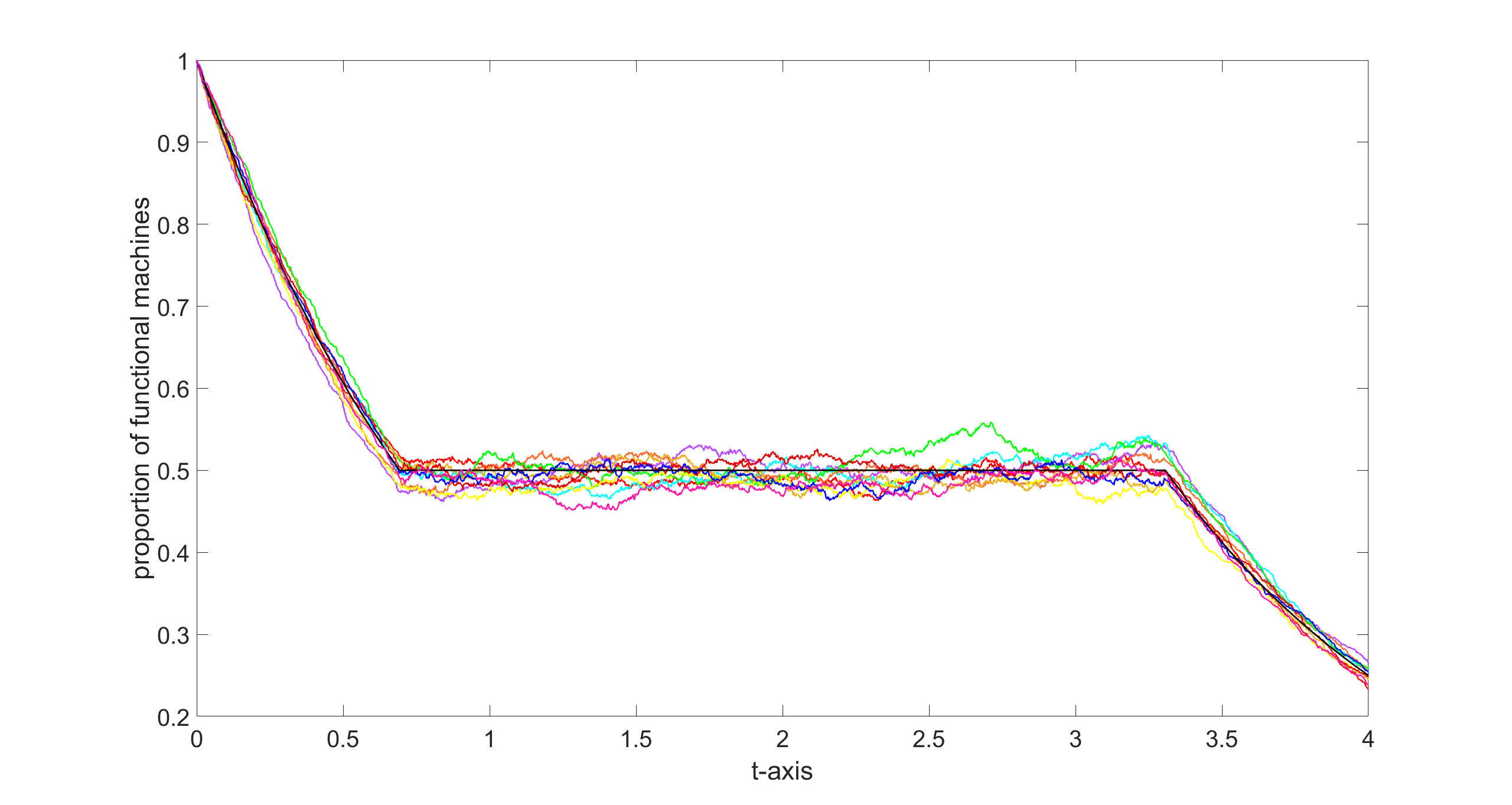}\\
	\caption{Left: State trajectories for different numbers $N$  of machines executing the optimal control for $(F)$. Right: Ten state trajectories for $N = 1000$ machines executing the asymptotically optimal control.} \label{fig:1}
\end{figure}
The optimal  value in the deterministic model is $V^F(1,0) = \frac{9}{2}-\frac{3}{2} \ln(2) \approx 3.4603$. If we simulate ten times the trajectory of the state process for $N = 1000$ machines while following the asymptotically optimal policy and take the average of the respective values, we obtain a mean of $3.43612$ which is slightly less than the value for $(F)$, cp. Theorem \ref{theo:upperbound}.

\subsection{Spreading malware}
This example is based on the deterministic control model considered in 
\cite{khouzani2012maximum}, see also \cite{gast2012mean} and treats the propagation of a virus in a mobile wireless network. It is based on the classical SIR model by Kermack–McKendrick, \cite{daley2001epidemic}. Suppose there are $N$ devices in the network. A device can be in one of the following states: {\em Susceptible (S), Infective (I), Dead (D) } or {\em Recovered (R).} A device is in the susceptible state  if it is not contaminated yet, but prone to infection. A device is infective if it is contaminated by the virus. It is dead if the virus has destroyed the software and recovered if the device has already a security patch  which makes it immune to the virus. The states $D$ and $R$ are absorbing. The joint process $\mu_t^N=(S_t^N,I_t^N,D_t^N,R_t^N)$ is a controlled continuous-time Markov chain where $X_t^N$ represents the fraction of devices in  state $X\in\{S,I,D,R\}$. The control is a strategy of the virus which chooses the rate $a(t)\in [0,\bar a]$, at which infected devices are destroyed. In this model we have $S_t^N+I_t^N+D_t^N+R_t^N=1$ and $S_t^N,I_t^N,D_t^N,R_t^N\ge 0$. The transition rates of one device are as follows: A susceptible device gets infected with rate $\lambda_{SI} I_t$ with $\lambda_{SI} >0.$ The rate is proportional to the number of infected devices and we thus have an interaction of one agent with the empirical distribution of the others.  And it gets recovered with rate $\lambda_{SR}>0$ which is the rate the security patch is distributed.  An infected device gets killed by the virus with rate $a(t)\in [0,\bar a]$ chosen by the attacker and gets recovered at rate $\lambda_{IR}>0.$ The rates are shown in the following figure:

\begin{figure}[ht]
\centering
\begin{tikzpicture}
\node[shape=circle,fill=blue!20, draw=blue!60] (S) at (-3.5,0) {S};
\node[shape=circle,fill=blue!20, draw=blue!60] (I) at (0.5,0) {$I$}; \node[shape=circle,fill=blue!20, draw=blue!60] (R) at (3.5,2) {$R$}; \node[shape=circle,fill=blue!20, draw=blue!60] (D) at (3.5,-1) {$D$};
  
     \path [->] (S) edge node[above] {$\lambda_{SR}$} (R);
    \draw[->] (S) edge node[below] {$\lambda_{SI} I_t^N$} (I);
  \draw[->] (I) edge node[below] {$ \quad \lambda_{IR}$} (R);
    \draw[->] (I) edge node[below] {$a_t$} (D);

\end{tikzpicture}
\caption{Transition intensities of one device between the possible states.}
\end{figure}
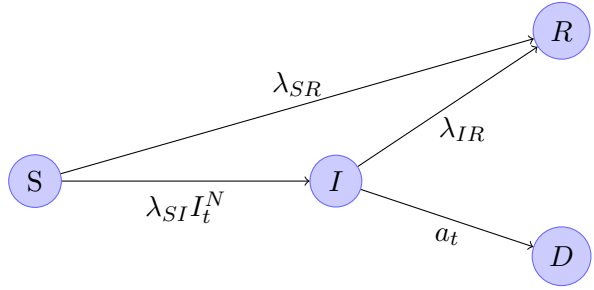

The intensities of one device at time $t$ are summarized by
\begin{eqnarray*}
    q(\{I\} | S, \cdot, \mu_t^N) = \lambda_{SI} I_t^N, & q(\{R\} | S, \cdot, \mu_t^N) = \lambda_{SR},\\
    q(\{D\} | I, a_t, \mu_t^N) = a_t, &    q(\{R\} | I, \cdot, \mu_t^N) = \lambda_{IR}.
\end{eqnarray*}
Thus, the diagonal elements of the intensity matrix are given by
\begin{eqnarray*}
    q(\{S\} | S, \cdot, \mu_t^N) = -\lambda_{SI} I_t^N-\lambda_{SR},\qquad  q(\{I\} | I, a_t, \mu_t^N) = -a_t-\lambda_{IR},  
\end{eqnarray*}
$$q(\{D\} | D, \cdot, \mu_t^N) =     q(\{R\} | R, \cdot, \mu_t^N) = 0$$
and all other intensities are zero. Note that (Q1)-(Q5) are satisfied and that since the intensities are linear in $a$, there is no need for a relaxed control. The initial state of the network is $\mu_0^N=(S_0^N,I_0^N,D_0^N,R_0^N)=(1-I_0,I_0,0,0)$ with $0<I_0<1.$ The aim of the virus is to produce as much damage as possible over the time interval $[0,T]$, evaluated by 
$$ \EE\left[ D_T^N + \frac1T \int_0^T (I^N_t)^2 dt\right]$$
which is given when we choose $r(i,a,\mu)=\frac{1}{T}(\mu(2))^2$ (the second component of $\mu$ squared) and an appropriate terminal reward. (R1) and (R2) are satisfied. Thus, problem $(F)$ in this setting is given by (we denote the limit by $\mu_t=(S_t,I_t,D_t,R_t)$)
\begin{align*}
 (F)   & \quad\quad \sup_{(a_t)} D_T + \frac1T \int_0^T I^2_t dt,\\
   & \quad\quad s.t.\; a_t \in [0,\bar a], \mbox{ and for all } t\in[0,T]\\
    & \quad\quad \hspace*{0.7cm} 
    S_t = 1-I_0 + \int_0^t -\lambda_{SI} I_sS_s-\lambda_{SR} S_sds, \\
    & \quad\quad \hspace*{0.7cm} 
    I_t  = I_0 + \int_0^t \lambda_{SI} I_sS_s-\lambda_{IR} I_s -a_t I_sds, \\
     & \quad\quad \hspace*{0.7cm} 
    D_t  =  \int_0^t a_t I_sds.
\end{align*}
A solution of this deterministic control problem can be found in \cite{khouzani2012maximum}. It is shown there that a critical time point $t_1\in[0,T]$ exists such that $a_t=0$ on $t\in [0,t_1]$ and $a_t=\bar a$ on $t\in(t_1,T].$ Thus, the attacker is not destroying devices from the beginning because this lowers the number of devices which can get infected. Instead, she first waits to get more infected devices before setting the kill rate to a maximum.
\begin{figure}[H]
	\centering
	\includegraphics[height=9cm]{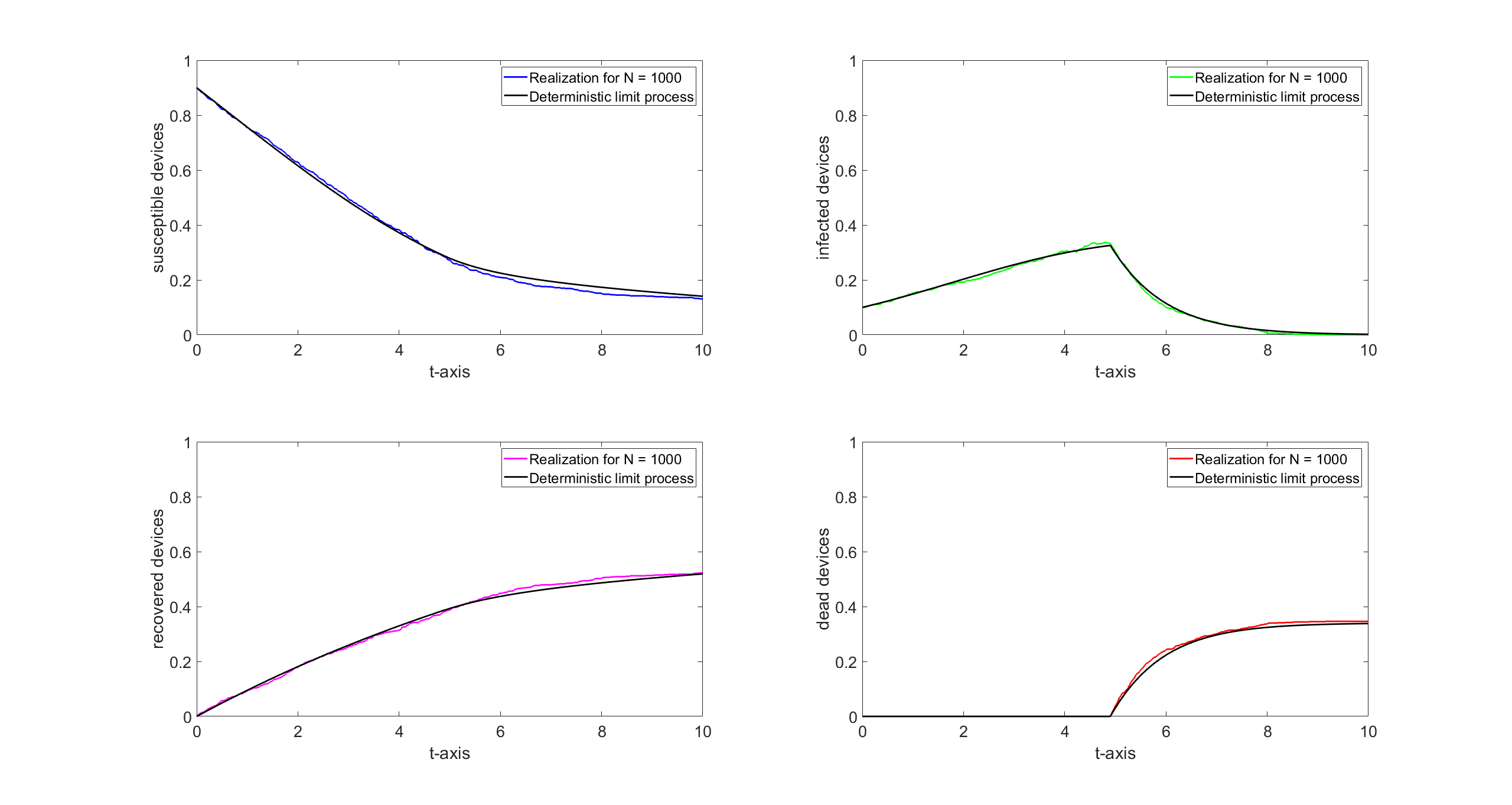}\\
	\caption{State trajectories for $N=1000$ devices under optimal control for $\lambda_{SI}=0.6, \lambda_{SR}=\lambda_{IR}=0.2, \bar{a}=1, T=10.$}\label{fig:3}
\end{figure}
A numerical illustration can be found in Figure \ref{fig:3}. There we can see the trajectories of the optimal state distribution in $(F)$ and simulated paths for $N=1000$ devices for $\lambda_{SI}=0.6, \lambda_{SR}=\lambda_{IR}=0.2, \bar{a}=1, T=10.$ The optimal time point for setting $a_t$ to the maximum is here 4.9. The simulated paths are almost indistinguishable from the deterministic trajectories.

\subsection{Resource competition}
This example shows that  feedback policies in the deterministic problem are not necessarily asymptotically optimal when implemented in the $N$ agents problem. The infinite horizon problem $(F)$ could also be solved using an HJB equation which would provide (under sufficient regularity) a feedback control $\hat\pi(\cdot|\mu)$. I.e. we obtain the optimal control by $\hat\pi_t = \hat\pi(\cdot|\mu_t)$. This feedback function could also be used in the $N$ agents model. However, in this case convergence of the $N$ agents model to the deterministic model like in Theorem \ref{theo:asymptotic1} is not guaranteed. Convergence may fail when discontinuities in the feedback function are present. The example is an adaption of the queuing network considered in \cite{kumar1989dynamic,rybko1992ergodicity} to our setting. Suppose the state space is given by $S=\{1,2,3,4,5,6,7,8\}.$ Agents starting in state 1 change to state 2, then 3 and are finally absorbed in state 4. Agents starting in state 5 change to state 6, then 7 and are finally absorbed in state 8. The aim is to get the agents in the absorbing states as quickly as possible by activating the intensities in states 2,3,6 and 7.  The intensity for leaving states $1$ and $5$ is $\lambda_1=\lambda_5=1$, the full intensity for leaving states $2$ and $6$ is $\lambda_2=\lambda_6=6$ and  finally the full intensity for leaving states $3$ and $7$ is $\lambda_3=\lambda_7=1.5.$ The action space is $A=\{0,1\}$ where actions have to be taken in states $2,3,6$ and $7$ and determine the activation of the transition intensity. Action $a=0$ means that the intensity is deactivated and $a=1$ that it is fully activated.  There is a resource constraint such that the sum of the activation probabilities in states $2$ and $7$ as well as the sum of the activation probabilities in states $3$ and $6$ are constraint by 1 (see remark on p.13). When we denote the randomized control by $\hat\pi_t^2= a_t, \hat\pi_t^7= 1-a_t, \hat\pi_t^6= b_t, \hat\pi_t^3= 1-b_t$, $a_t,b_t\in[0,1]$  then
the intensities are given by
\begin{eqnarray*}
    q(\{3\} | 2,a_t, \mu_t^N) = a_t \lambda_2,&&  q(\{4\} | 3, 1-b_t, \mu_t^N) = (1-b_t) \lambda_3,\\
     q(\{7\} | 6,b_t, \mu_t^N) = b_t\lambda_6, && q(\{8\} | 7, 1-a_t, \mu_t^N) = (1-a_t)\lambda_7.
\end{eqnarray*}
An illustration of this model can be seen in Figure \ref{fig:4}.

\begin{figure}[ht]
\centering
\begin{tikzpicture}
\node[shape=circle,fill=blue!20, draw=blue!60] (1) at (-4.5,1) {1};
\node[shape=circle,fill=blue!20, draw=blue!60] (2) at (-1.5,1) {2}; \node[shape=circle,fill=blue!20, draw=blue!60] (3) at (1.5,1) {3}; \node[shape=circle,fill=blue!20, draw=blue!60] (4) at (4.5,1) {4};

\node[shape=circle,fill=blue!20, draw=blue!60] (8) at (-4.5,-1) {8};
\node[shape=circle,fill=blue!20, draw=blue!60] (7) at (-1.5,-1) {7}; \node[shape=circle,fill=blue!20, draw=blue!60] (6) at (1.5,-1) {6}; \node[shape=circle,fill=blue!20, draw=blue!60] (5) at (4.5,-1) {5};

\draw[blue, very thick] (-2,-1.5) rectangle (-1,1.5);
\draw[blue, very thick] (1,-1.5) rectangle (2,1.5);
  
     \path [->] (1) edge node[above] {$\lambda_1$} (2);
   \draw[->] (2) edge node[above] {$a_t \lambda_2$} (3);
    \draw[->] (3) edge node[above] {$(1-b_t) \lambda_3$} (4);
  
    \draw[->] (5) edge node[below] {$\lambda_5$} (6);
    \draw[->] (6) edge node[below] {$b_t\lambda_6$} (7);
    \draw[->] (7) edge node[below] {$(1-a_t)\lambda_7$} (8);

\end{tikzpicture}
\caption{Transition intensities of one agent for the resource constraint problem.}\label{fig:4}
\end{figure}
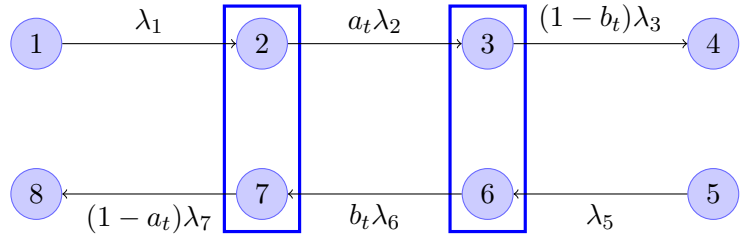
The initial state distribution is given by $\mu_0=(\frac{5}{14},\frac{1}{14},\frac{1}{14},0,\frac{5}{14},\frac{1}{14},\frac{1}{14},0)$ where we assume for the simulation that we have $N=1400$ agents.
Now suppose further that agents in the absorbing states 4 and 8 produce no cost whereas agents in state 3 and 7 are the most expensive as soon as there are at least $0.01\%$ of the population present. This optimization criterion leads to a priority rule where agents in state 3 receive priority (and thus full capacity) over those in state 6 (as long as there are at least $0.01\%$ present) and agents in state 7 receive priority (and thus full capacity) over those in state 2 (as long as there are at least $0.01\%$ present). In the deterministic problem the priority rule can be implemented such that once the number of agents in state 3 and 7 fall to the threshold  of $0.01\%$ of the population it is possible to keep this level. This is not possible in the $N$ agents problem. The priority switch leads to blocking the agents in the other line, see Figure \ref{fig:5}. The blue line shows the state trajectories in the deterministic model. The red line is a realization of the system for $N=1400$ agents where we use the deterministic open-loop control of Theorem \ref{theo:asymptotic1}. We see that the state processes converge. Finally the green line is a realization of the $N=1400$ agents model under the priority rule. We can see that here state processes do not converge.
\begin{figure}[H]
	\centering
	\includegraphics[height=9cm]{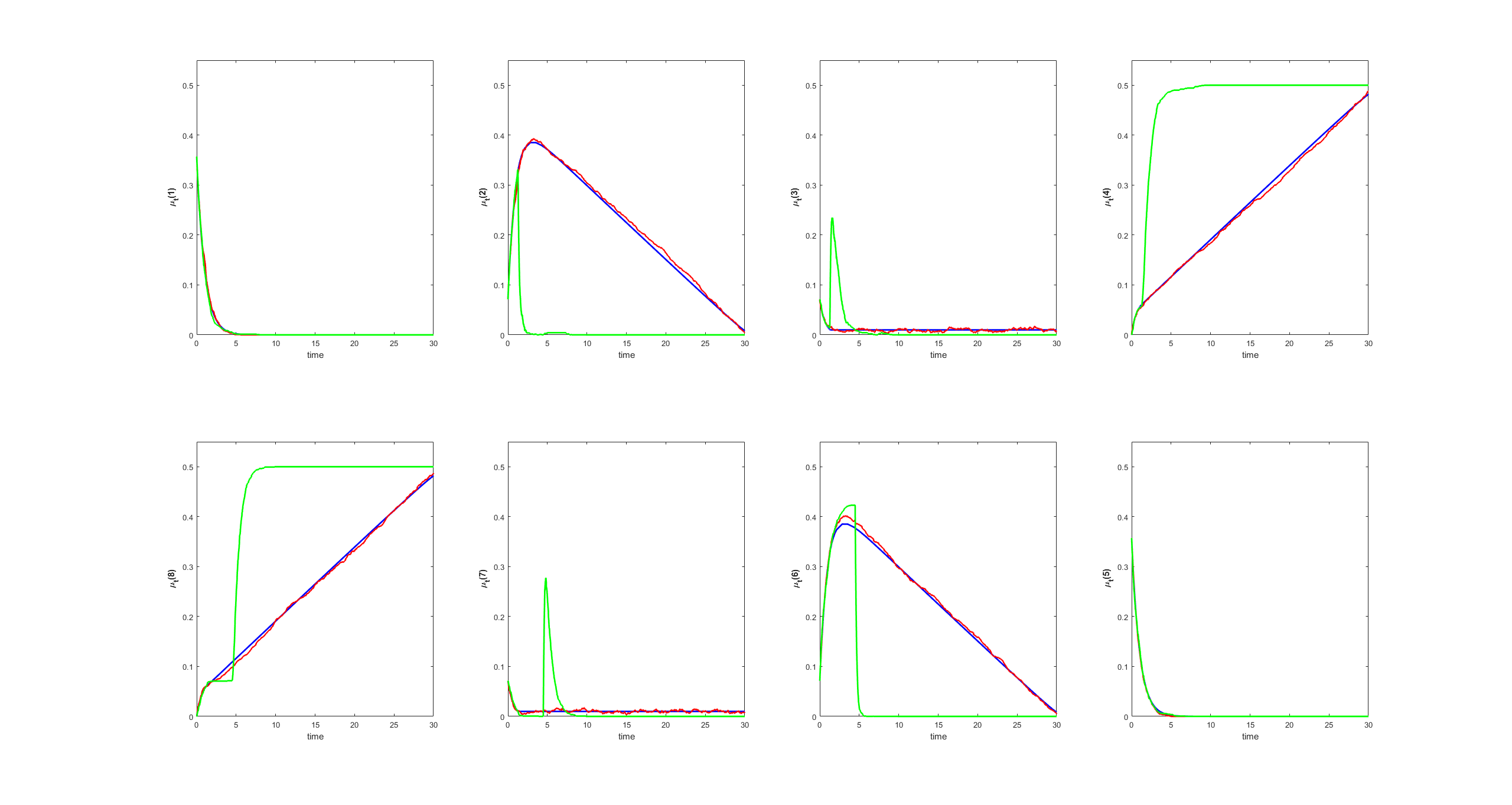}\\
	\caption{State trajectories for $N=1400$ agents. Deterministic trajectory (blue), realization under deterministic open loop (red),  realization under feedback priority rule (green). }\label{fig:5}
\end{figure}

\section{Appendix}
\subsection{Auxiliary result}
\begin{lemma}\label{lem:unifconf}
Let $X$ be a separable metric space, $Y$ be compact metric and $f:X \times Y\to \R$ continuous. Then $x_n\to x$ for $n\to\infty$ implies
$$\lim_{n\to \infty} \sup_{y\in Y} |f(x_n,y)-f(x,y)|=0. $$ 
\end{lemma}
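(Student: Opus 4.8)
The plan is to argue by contradiction, using the compactness of $Y$ to extract a convergent sequence of "worst" points and then invoking the \emph{joint} continuity of $f$ on $X\times Y$. The point is that pointwise convergence of the sections $f(x_n,\cdot)$ towards $f(x,\cdot)$ upgrades to uniform convergence precisely because the index set $Y$ is compact and $f$ is continuous in both variables simultaneously.

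Concretely, suppose the conclusion fails. Then there is an $\varepsilon>0$ with $\sup_{y\in Y}|f(x_n,y)-f(x,y)|\ge \varepsilon$ for infinitely many $n$; passing to a subsequence (still written $x_n$), I may assume this holds for every $n$. For each fixed $n$ the map $y\mapsto |f(x_n,y)-f(x,y)|$ is continuous on the compact set $Y$, so its supremum is attained, and I can select $y_n\in Y$ with $|f(x_n,y_n)-f(x,y_n)|\ge\varepsilon$. Now I would use compactness of $Y$ to pass to a further subsequence $y_{n_k}\to y^\ast\in Y$. Along this subsequence I have simultaneously $x_{n_k}\to x$ in $X$ and $y_{n_k}\to y^\ast$ in $Y$, whence by joint continuity $f(x_{n_k},y_{n_k})\to f(x,y^\ast)$, and also $f(x,y_{n_k})\to f(x,y^\ast)$ since $y\mapsto f(x,y)$ is continuous. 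Therefore $|f(x_{n_k},y_{n_k})-f(x,y_{n_k})|\to 0$, contradicting the lower bound $\varepsilon$, and the claim follows.

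I do not expect a genuine obstacle here: the only ingredients are the convergence $x_n\to x$, the compactness of $Y$, and joint continuity; in particular separability of $X$ is not actually needed for this argument. An equivalent and equally short route would be to note that $K:=\{x\}\cup\{x_n:n\in\N\}$ is compact in $X$, so that $f$ is uniformly continuous on the compact product $K\times Y$; the uniform-continuity modulus then yields a $\delta>0$ with $|f(x',y)-f(x'',y)|<\varepsilon$ whenever $d_X(x',x'')<\delta$, uniformly in $y$, and the conclusion is immediate once $d_X(x_n,x)<\delta$. I would favour the contradiction argument for its brevity and because it isolates the single role played by the compactness of $Y$.
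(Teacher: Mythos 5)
Your argument is correct and complete. Note that the paper does not actually prove this lemma itself; it simply defers to Lemma B.12 of the cited reference (\cite{lange2017cost}), so there is no in-paper proof to compare against. Your contradiction argument is the standard one: extract maximizers $y_n$ of the continuous function $y\mapsto |f(x_n,y)-f(x,y)|$ on the compact set $Y$, pass to a convergent subsequence $y_{n_k}\to y^\ast$, and use joint continuity to force $|f(x_{n_k},y_{n_k})-f(x,y_{n_k})|\to 0$, contradicting the assumed lower bound $\varepsilon$. Every step checks out, and your observation that separability of $X$ is superfluous is also correct (it is presumably inherited from the phrasing of the cited source); the alternative route via uniform continuity of $f$ on the compact product $\bigl(\{x\}\cup\{x_n:n\in\N\}\bigr)\times Y$ is equally valid.
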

For a proof see e.g. Lemma B.12, \cite{lange2017cost}.

\subsection{ Proof of Theorem \ref{theo:equivalence} }
First of all observe that the reward function $r$ in \eqref{eq:reward1} in the $N$ agents problem is symmetric, i.e. $r(\mathbf{x},\mathbf{a})=r(s(\mathbf{x}),s(\mathbf{a}))$ for any permutation $s(\cdot)$ of the vectors. Moreover, the agent transition intensities $q(\cdot|i,a,\mu[\mathbf{X}_t])$ depend only on the own state of the agent and on $\mu[\mathbf{X}_t].$ Thus, the optimal policy in the $N$ agents problem at time $t$ only depends on $\mu[\mathbf{X}_t]$. Now for a decision rule $\pi$ for the $N$ agents problem define for all states $i\in S:$
     $$ \hat\pi^i(da|\mu) := \frac{1}{N\mu(i)} \sum_{k=1}^N \pi^k(da|\mathbf{x}) \mathds{1}_{\{x^k=i\}}$$
     where $\mu=\mu[\mathbf{x}].$
     On the right-hand side we consider all agents in state $i$ and take a convex combination of their action distributions as the action distribution in state $i.$
     If $\pi$ depends only on $\mu[\mathbf{x}],$ then this is also true for $\hat\pi.$

     Choosing $\hat\pi$ in the measure-valued MDP yields the reward (again $\mu=\mu[\mathbf{x}]$)
     \begin{align*}
         &r(\mu,\hat\pi)= \sum_{i\in S} \int r(i,a,\mu) \frac{1}{N\mu(i)} \sum_{k=1}^N \pi^k(da|\mathbf{x}) \mathds{1}_{\{x^k=i\}} \mu(i)\\=& \frac1N \sum_{k=1}^N \sum_{i\in S} \mathds{1}_{\{x^k=i\}}  \int r(i,a,\mu)\pi^k(da|\mathbf{x})=r(\mathbf{x},\pi).
     \end{align*}
     Thus, the reward in both formulations is the same. 
     Finally the transition intensity in the $N$ agents model that one agent changes its state from $i$ to $j$ is given by (again $\mu=\mu[\mathbf{x}]$)
     \begin{align*}
     & \sum_{k=1}^N \mathds{1}_{\{x^k=i\}} \int q(\{j\}| i,a,\mu) \pi^k(da|\mathbf{x} )\\
         = & N\mu(i) \int_A q(\{j\}| i,a,\mu)  \frac{1}{N\mu(i)} \sum_{k=1}^N \pi^k(da|\mathbf{x}) \mathds{1}_{\{x^k=i\}}\\= & 
          N\mu(i) \int_A q(\{j\}| i,a,\mu) \hat \pi^i(da|\mu) = q(\{\mu^{i\to j}\}| \mu,{\hat\pi }).
     \end{align*}
      Thus, the empirical measure process of the $N$ agents problem is statistically equal to the measure-valued MDP process and they produce the same expected reward under measure-dependent policies which implies the result. A formal proof has to be done by induction like in \cite{bauerle2023mean} Thm. 3.3.     

\subsection{ Proof of Lemma \ref{lem:martingale} }
First we show that $M_t^N(j)$ is bounded for fixed $t$:
\begin{align*}
   | M_t^N(j)| &= \Big|\mu_t^N(j)-\mu_0^N(j)-\int_0^t \sum_{i\in S} \mu_s^N(i)\int  q (\{j\}|i,a,\mu_s^N) \hat\pi_s^{N,i}(da)ds\Big| \\
    &\leq \vert {\mu_t^N(j)}-{\mu_0^N(j)}\vert+\int_0^t \sum_{i\in S} \mu_s^N(i)\int  {\vert q (\{j\}|i,a,\mu_s^N)\vert} \hat\pi_s^{N,i}(da)ds \\
    &\leq 1+q_{max}\cdot t <\infty
\end{align*}
Therefore $(M_t^N(j))_{t\ge 0}$ are square-integrable martingales. Now we take advantage of the fact that there are only jumps of height $\frac{1}{N}$ in our model, since no two agents change their state simultaneously. With the quadratic variation of the process we obtain
\begin{align}
    \nonumber
    \mathbb{E}[(M_t^N(j))^2] &= \mathbb{E}[\langle M_t^N(j)\rangle] \leq \frac{1}{N^2}\mathbb{E}[ \text{$\#$ jumps in } [0,t]] \\\nonumber
    &\leq \frac{1}{N^2}\cdot N\cdot q_{max}\cdot t = \frac{1}{N}\cdot q_{max} \cdot t \overset{N\to \infty}{\longrightarrow} 0.
\end{align}
Doob's $L^p$-inequality provides on $[0,t]$ 
\begin{equation}
    \nonumber
    \mathbb{E}[(\sup_{s\in [0,t]}M_s^N(j))^2] \leq 4\cdot \mathbb{E}[(M_t^N(j))^2] \overset{N\to \infty}{\longrightarrow} 0.
\end{equation}
Thus for the sequence $(\sup_{s\in [0,t]} M_s^N(j))_{N\in\N}$ it holds that
\begin{equation}
    \nonumber
    \sup_{s\in [0,t]} M_s^N(j) \quad\overset{L^2}{\longrightarrow} \quad0.
\end{equation}
Now we can find a suitable probability space $(\Omega,\mathcal{F},\PP)$, such that for $\PP$-almost all $\omega \in \Omega$ the sequence of functions $((M_s^N(j)(\omega))_{s\in [0,t]})_{N\in \N}$ converges uniformly to the zero-function.\\
The finite-dimensional distributions with arbitrary time-points $t_1,..,t_k\in [0,t]$ then obviously fulfill
\begin{equation}
\nonumber
    (M_{t_1}^N(j),...,M_{t_n}^N(j)) \quad\overset{\text{a.s.}}{\longrightarrow}\quad (0,...,0)
\end{equation}
and therefore in particular
\begin{equation}
\nonumber
    (M_{t_1}^N(j),...,M_{t_n}^N(j)) \quad\Rightarrow\quad (0,...,0).
\end{equation}
Here $\Rightarrow$ is the usual weak convergence of random vectors in $\R^n.$
To apply Theorem VI.16 in \cite{pollard} we check Aldous' condition. Let $(\delta_N)$ be a sequence of positive numbers with $\delta_N \to 0$ and $(\sigma_N)$ a sequence of stopping times w.r.t.\ $(\mathcal{F}_t^N)$ with values in $[0,t]$. Then we have
\begin{align}
    \nonumber
   \EE[(M_{\sigma_N}^N(j))^2] \leq \mathbb{E}[(\sup_{s\in [0,t]}M_s^N(j))^2] \leq 4\cdot \mathbb{E}[(M_t^N(j))^2] \overset{N\to \infty}{\longrightarrow} 0.
\end{align}
Further, for $N$ sufficiently large it holds that
\begin{equation}
    \nonumber
   \EE[(M_{\sigma_N+\delta_N}^N(j))^2] \leq \mathbb{E}[(\sup_{s\in [0,2t]}M_s^N(j))^2] \leq 4\cdot \mathbb{E}[(M_{2t}^N(j))^2] \overset{N\to \infty}{\longrightarrow} 0.
\end{equation}
Therefore $M_{\sigma_N}^N(j)$ and $M_{\sigma_N+\delta_N}^N(j)$ converge in $L^2$ to $0$ (and thus their difference). Hence, the conditions of Theorem VI.16 in \cite{pollard} are fulfilled, and the sequence $M_{t}^N(j)$ converges weakly on $[0,\infty)$ towards $0$ in the sense of the Skorokhod $J_1$-metric.

\subsection{Proof of Theorem \ref{theo:subsequence}}
We start by showing the relative compactness of a sequence $(\mu^N)_N$.
    We use Theorem 2.7 in \cite{kurtz}.
    The sequence $(\mu^N)_{N}$ has paths in $D_{\PP(S)}[0,\infty)$, where $\PP(S)$ is complete and separable with respect to the total variation distance.\\
    In what follows let $\sigma$ be an arbitrary $(\mathcal{F}_t^N)$-stopping time with $\sigma\leq T$  a.s.\\
    For every $\varepsilon>0$ and rational $t\geq0$ choose the compact set $\Gamma_{t,\varepsilon} \equiv \PP(S)$. Then  we obtain by construction of the model
    \begin{equation}
        \nonumber
        \PP(\mu_t^N \in \Gamma_{t,\varepsilon}) = 1.
    \end{equation}
    Moreover, for every $T>0$ it holds that
    \begin{align}
        \nonumber
        &\lim_{\delta \to 0} \limsup_{N\to \infty} \sup_{\sigma} \EE[\min\{1,\vert\vert \mu_\sigma^N - \mu_{\sigma+\delta}^N\vert\vert_{TV}\}] \\\nonumber
        \leq &\lim_{\delta \to 0} \limsup_{N\to \infty} \sup_{\sigma} \EE[\vert\vert \mu_\sigma^N - \mu_{\sigma+\delta}^N\vert\vert_{TV}] \\\nonumber
        \leq & \lim_{\delta \to 0} \limsup_{N\to \infty} \sup_{\sigma} \EE[\# \text{ state changes in }[\sigma,\sigma+\delta]  ] \cdot \frac{1}{N} \\\nonumber
        \leq &\lim_{\delta \to 0} \limsup_{N\to \infty} \sup_{\sigma} N\cdot q_{max}\cdot \delta \cdot \frac{1}{N} =0. 
    \end{align}
    The second inequality holds because $\vert\vert \mu_s^N-\mu_t^N\vert\vert_{TV} = \frac{1}{N}$, provided that in $[s,t]$ only one state change occurs, i.e. one agent changes its state. Theorem 2.7 in \cite{kurtz} now states that $(\mu^N)_{N}$ is relatively compact.\\\\
    Since $\mathcal{R}$ is compact, so is $\mathcal{R}^{\vert S\vert}$ and we obtain directly the relative compactness of $(\hat{\pi}^N)_N$. The relative compactness of the sequence of state-action-processes $(\mu^N,\hat\pi^N)_N$ then follows by Proposition 3.2.4 in \cite{ethierkurtz}. Thus, a converging subsequence exists. To ease the notation we will still denote it by $(N).$
    \\\\
    To prove the continuity of the limit state process define for arbitrary $\mu\in D_{\PP(S)}[0,\infty)$ 
    \begin{align}
    \nonumber
    J(\mu,u) &= \sup_{0\leq t\leq u} \vert\vert \mu_t-\mu_{t-}\vert\vert_{TV}.\\\nonumber
    J(\mu) &= \int_0^\infty e^{-u}J(\mu,u)du.
    \end{align}
    For the sequence of state processes $(\mu^N)_{N}$ we get
\begin{align}
    \nonumber
    \lim_{N\to\infty} J(\mu^N) = \lim_{N\to\infty} \int_0^\infty e^{-u} \sup_{0\leq t\leq u} {\vert\vert \mu_t^N-\mu^N_{t-}\vert\vert_{TV}} du \leq \lim_{N\to \infty} \frac{1}{N} =0.
\end{align}
We exploit the fact that there can be at most jumps of height $\frac{1}{N}$ in the state processes with $N$ agents. Theorem 3.10.2 a) in \cite{ethierkurtz} then implies the a.s.\ continuity of the limit state process $(\mu_t^\ast)_{t\geq0}$.\\\\
In particular, due to the Skorokhod representation theorem we find a probability space such that convergence of $\mu^N\Rightarrow \mu^*$ holds almost surely in $J_1$ and is uniformly on compact sets such as $[0,t]$ since $\mu^*$ is a.s.\ continuous (see p. 383 in \cite{whitt2002stochastic}). Thus, component-wise for almost all $\omega$ in the probability space above we obtain:
\begin{equation}
    \nonumber\lim_{N\to \infty} \ \sup_{0\leq s\leq t} \vert\vert \mu^{N}_s(\omega)-\mu^\ast_s(\omega) \vert\vert_{TV}= 0
\end{equation} for every $t\in [0,\infty)$.
\\\\
    Finally we have to take the limit $N\to\infty$ in \eqref{eq:Martingal}. By the previous Lemma \ref{lem:martingale} we know that the martingale on the left-hand side converges to zero and that $\mu^{N}_t(\omega) \to \mu_t^*$. Now consider the integral on the right-hand side:
    \begin{align*}
      & \left| \int_0^t\! \sum_{i\in S} \mu_s^N(i)\! \int\!\! q(\{j\}|i,a,\mu^N_s) \hat\pi^{N,i}_s(da)ds - \int_0^t\! \sum_{i\in S} \mu_s^*(i)\! \int\!\! q(\{j\}|i,a,\mu^*_s) \hat\pi^{*,i}_s(da)ds\right|\\ 
\le &  \left| \int_0^t\! \sum_{i\in S} \mu_s^N(i)\! \int\!\! q(\{j\}|i,a,\mu^N_s) \hat\pi^{N,i}_s(da)ds - \int_0^t\! \sum_{i\in S} \mu_s^*(i)\! \int\!\! q(\{j\}|i,a,\mu^*_s) \hat\pi^{N,i}_s(da)ds\right|\\ 
+ & \left| \int_0^t\! \sum_{i\in S} \mu_s^*(i)\! \int\!\! q(\{j\}|i,a,\mu^*_s) \hat\pi^{N,i}_s(da)ds - \int_0^t\! \sum_{i\in S} \mu_s^*(i)\! \int\!\! q(\{j\}|i,a,\mu^*_s) \hat\pi^{*,i}_s(da)ds\right|.
\end{align*}
The second expression tends to $0$ for $N\to\infty$ due to the definition of the Young topology and the fact that $a\mapsto q(\{j\}|i,a,\mu^*_s)$ is continuous by assumption. The first expression can be bounded by
\begin{align*}
    & \int_0^t\! \sum_{i\in S} \! \int\!\! \left|  \mu_s^N(i) q(\{j\}|i,a,\mu^N_s)  -  \mu_s^*(i) q(\{j\}|i,a,\mu^*_s)\right| \hat\pi^{N,i}_s(da)ds\\
    \le & \int_0^t\! \sum_{i\in S} \sup_{a\in D(i)} \left|  \mu_s^N(i) q(\{j\}|i,a,\mu^N_s)  -  \mu_s^*(i) q(\{j\}|i,a,\mu^*_s)\right| ds
\end{align*}
which also tends to zero due to dominated convergence, (Q4),(Q5) and Lemma \ref{lem:unifconf}.\\\\
Now  putting things together, equation \eqref{eq:Martingal} implies that the limit satisfies the stated differential equation.

\subsection{Proof of Theorem \ref{theo:convrate}}
    Fix $\hat\pi^*$ and let $(\mu_t)$ be the unique solution of 
    $$ \mu_t(j) = \mu_0(j) + \int_0^t \sum_{i\in S} \mu_s(i) \int q(\{j\}|i,a,\mu_s) \hat\pi_s^{*,i}(da)ds, \quad \forall t\in[0,T], j=1,\ldots,|S|.$$
    Further recall from \eqref{eq:Martingal}
\begin{align}\nonumber
    \mu_t^N(j) =&
   \mu_0^N(j)+\int_0^t \sum_{i\in S} \mu_s^N(i)\int  q (\{j\}|i,a,\mu_s^N) \hat\pi_s^{*,i}(da)ds  + M_t^N(j), \quad \forall t\in[0,T], j=1,\ldots,|S|.
\end{align} 
    Now we obtain
    \begin{align*}
         \Big| V_{\pi^*}^{N,T}(\mu_0^N) - V^{F,T}(\mu_0)\Big| & \le 
          \EE_\mathbf{x}^{\hat\pi^*}\Big[ \int_0^T e^{-\beta t} | r(\mu^{N}_t,\hat\pi_t^*)- r(\mu_t,\hat\pi_t^*)|dt+|g(\mu^{N}_T)-g(\mu_T)|\Big]\\
&\le  \EE_\mathbf{x}^{\hat\pi^*}\Big[ \int_0^T e^{-\beta t} L_3 \|\mu^{N}_t-\mu_t\|_{TV}dt+ L_1 \|\mu^{N}_T-\mu_T\|_{TV}\Big].
\end{align*}
Note that (R1') implies the Lipschitz continuity of $\mu \mapsto r(\mu,\pi)$ with a constant $L_3>0$ since $r$ is bounded. Next consider
\begin{align*}
    |\mu^{N}_t(j)-\mu_t(j)|  \le & |\mu^{N}_0(j)-\mu_0(j)| + \int_0^t   \sum_{i\in S} \mu_s^N(i)\int  | q (\{j\}|i,a,\mu_s^N)-q (\{j\}|i,a,{\mu}_s)| \hat\pi_s^{*,i}(da)ds  \\
& +  \int_0^t   \sum_{i\in S} |\mu_s^N(i)-\mu_s(i)| \int
q (\{j\}|i,a,{\mu}_s) \hat\pi_s^{*,i}(da)|ds + |M_t^N(j)|\\
\le & |\mu^{N}_0(j)-\mu_0(j)| + (L_2+2 q_{max} ) \int_0^t    \|\mu^{N}_s-\mu_s\|_{TV}ds+|M_t^N(j)|.
\end{align*}
For the last term we have already shown in Lemma \ref{lem:martingale} that
$$  \EE_\mathbf{x}^{\hat\pi^*}[ |M_t^N(j)|] \le  \sqrt{\EE_\mathbf{x}^{\hat\pi^*} [(M_t^N(j))^2]} \le \frac{\sqrt{q_{max} t}}{\sqrt{N}}.
 $$
Thus, from the two previous inequalities we get with $L_4 := |S|\sqrt{q_{max}T}/2$ that
\begin{align*}
   \EE_\mathbf{x}^{\hat\pi^*}\big[   \|\mu^{N}_t-\mu_t\|_{TV}\big] &\le  \EE_\mathbf{x}^{\hat\pi^*}\big[   \|\mu^{N}_0  -\mu_0\|_{TV}\big] + \frac{{L_4}}{\sqrt{N}}+ L_5  \int_0^t     \EE_\mathbf{x}^{\hat\pi^*}\big[ \|\mu^{N}_s-\mu_s\|_{TV}\big]ds \\
&\le  \frac{{L_6}}{\sqrt{N}}+ L_5  \int_0^t     \EE_\mathbf{x}^{\hat\pi^*}\big[ \|\mu^{N}_s-\mu_s\|_{TV}\big]ds.
\end{align*}
Finally, Gronwall's inequality implies that for all $t\in [0,T]$
$$  \EE_\mathbf{x}^{\hat\pi^*}\big[   \|\mu^{N}_t-\mu_t\|_{TV}\big] \le  \frac{{L_7}}{\sqrt{N}} e^{L_5 T}$$
which in turn implies the statement.

\bibliographystyle{apalike}
\bibliography{literatur}

\section{Statements and Declarations}
The authors have no relevant financial or non-financial interests to disclose.

\end{document}